\newtheorem{rem}{Remark}
\newtheorem{theo}{Theorem}
\def\pmatrix{\left(\begin{array}}
\def\endpmatrix{\end{array}\right)}
\def\RR{{\mathbb{R}}}
\title{Mathematical aspects relative to the fluid statics of a self-gravitating perfect-gas isothermal sphere}
\author{Pierluigi Amodio\thanks{Dipartimento di Matematica, Universit\`a di Bari, Italy,  {\tt pierluigi.amodio@uniba.it}, {\tt felice.iavernaro@uniba.it}, {\tt arcangelo.labianca@uniba.it}, {\tt monica.lazzo@uniba.it}, {\tt lorenzo.pisani@uniba.it}},
Domenico Giordano\thanks{European Space Agency, ESTEC, Aerothermodynamics Section (retired), The Netherlands, {\tt dg.esa.retired@gmail.com}},
Felice Iavernaro$^*$,\\
Arcangelo Labianca$^*$, Monica Lazzo$^*$,\\
Francesca Mazzia\thanks{Dipartimento di Informatica, Universit\`a di Bari, Italy, {\tt francesca.mazzia@uniba.it}} and Lorenzo Pisani$^*$
}
\date{}
\begin{document}
%\begin{frontmatter}
\maketitle

\begin{abstract}
%self-gravitating, spherically(radially)-symmetric, isothermal, steady-state, perfect gas
In the present paper we analyze and discuss some mathematical aspects of the fluid-static configurations of a self-gravitating perfect gas enclosed in a spherical solid shell. 
%The final goal is to  highlight possible significant differences in the qualitative behavior of the solutions in the case where, in addition to the classical Newton law of gravitation, the gravitomagnetic vector field effects are taken into account.
%In this report we study, both theoretically and numerically, the static solutions of this physical system.
The mathematical model we consider is based on the well-known Lane--Emden equation, albeit under boundary conditions that differ from those usually assumed in the astrophysical literature. 
The existence of multiple solutions requires particular attention in devising appropriate numerical schemes apt to deal with and catch the solution multiplicity as efficiently and accurately as possible.
In sequence, we describe some analytical properties of the model, the two algorithms used to obtain numerical solutions, and the numerical results for two selected cases. 
%This analysis is the first step towards the more general investigation addressed to understanding the dynamics of the  system governing the motion of the gas. 

%In fact, the results and techniques presented here will be  exploited in a following research phase to set up  the initial and boundary data needed by the dynamical system governing the motion of the gas. 
\end{abstract}

\textbf{keyword} 
Self-gravitating gas, Lane--Emden equation, Multiple solutions.

%\end{frontmatter}

%%%%%%%%%%%%%%%%%%%%%%%%%%%%%%%

\section{Introduction}
The motivation behind our research in gravitational fluid dynamics is described in details in the introduction to our recent paper \cite{ESA19a}. The core purpose of that paper was to study the static configurations of a self-gravitating isothermal sphere (see Figure \ref{sphere}).
\begin{figure}[h]
\begin{center}
	\includegraphics[keepaspectratio=true, trim= 5ex 8ex 4ex 10ex , clip , width=.75\columnwidth]{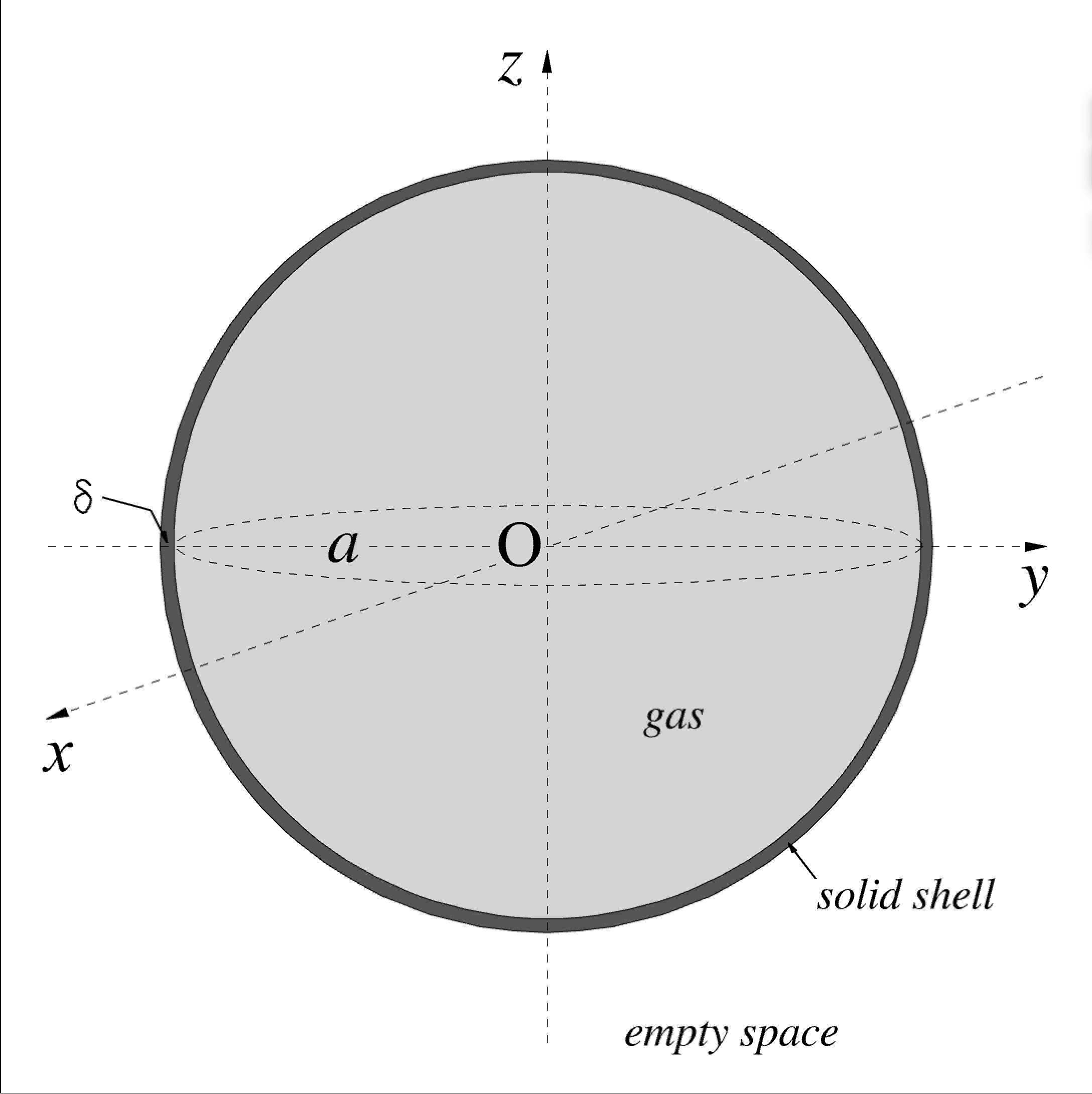}
\end{center}
	\caption{Study case: fluid statics of a self-gravitating isothermal perfect gas inside a spherical solid shell with internal radius $a$ and thickness $\delta$.\hfill\ }\label{sphere}
\end{figure}
The emphasis therein was mainly on physics, fluid statics and thermodynamics in particular; we described thoroughly the adopted physical model and the ensuing governing equations, and discussed numerical results.
The spherically symmetric mathematical problem inside the gas turns out to hinge completely on the solution of the isothermal Lane-Emden equation \cite{emden}
\begin{equation}\label{rho_c0}
\dfrac{d^2 \log \rho}{d r^2} +\dfrac{2}{r} \dfrac{d \log \rho}{d r} +\dfrac{4 \pi G}{RT} \rho =0,
\end{equation}
which provides the mass-density radial profile. Once this is available, pressure and gravitational field follow straightforwardly from
\begin{equation}\label{p}
   p = \rho R T
\end{equation}
and 
\begin{equation}\label{gf}
  g = R T \dfrac{d\log \rho}{d r}.
\end{equation}
The following notation is used in equations (\ref{rho_c0})-(\ref{gf}):
\begin{tabbing}
	i \= xx \= xxxxxxxxxx \kill
	\> $r$             \> radial distance from origin \\
	\> $\rho$       \> mass density \\
	\> $G$            \> gravitational constant, $6.67428\cdot10^{-11}$m$^{3}\cdot$kg$^{-1}\cdot$s$^{-2}$ \\
	\> $T$            \> temperature \\
	\> $R$            \> gas constant \\ 
	\> $p$          \> pressure \\
	\> $g$          \> gravitational field
\end{tabbing}
Equation~\eqref{rho_c0} is complemented with two conditions.
The condition
\begin{equation}\label{left_condition}
\left. \dfrac{d \log \rho}{d r} \right|_{r=0}=0
\end{equation}
is necessary from a mathematical point of view, to guarantee regularity of solutions (see \cite{jo-lun}), and is consistent with Gauss' theorem, requiring the vanishing of the gravitational field at the center of the gas sphere.
%enforces the vanishing of the gravitational field at the center of the gas sphere as required by Gauss' theorem.
The boundary condition
\begin{equation}\label{right_condition}
\left. \dfrac{d \log \rho}{d r}\right|_{r=a} = -\dfrac{G m_g}{a^2 R T}
\end{equation}
reflects the gravitational-field value established at the internal wall of the solid shell by Gauss' theorem and  the spherical symmetry of the physical system.
In imposing the  boundary condition \eqref{right_condition}, we diverge from the standard followed in the astrophysical literature, which adopts the prescription of the gas density [$\rho(0)$] at the center of the  sphere.
Ample discussion of the physical arguments that motivate and justify our choice is provided in \cite{ESA19a}.

According to standard practice, we reformulate the mathematical problem [\eqref{rho_c0}; \eqref{left_condition} and \eqref{right_condition}] by introducing nondimensional radial coordinate and density together with corresponding scale factors marked with a tilde 
\begin{equation}\label{scal}
     r = \tilde r \, \eta \qquad \mbox{and} \qquad \rho(r)=\tilde \rho \, \xi(\eta),
\end{equation}
so that the governing differential equation \eqref{rho_c0} and its associated boundary conditions \eqref{left_condition} and \eqref{right_condition} turn into the nondimensional form
\begin{equation*}
%\label{rho_c}
    \dfrac{d^2 \log \xi}{d \eta^2} +\dfrac{2}{\eta} \dfrac{d \log \xi }{d \eta} +\dfrac{4 \pi G \tilde r^2 \tilde \rho}{RT} \xi = 0
\end{equation*}
and
\begin{equation*}
%\label{leftright_condition}
     \left.\dfrac{d \log\xi}{d \eta}\right|_{\eta=0} = 0,  \qquad \left. \dfrac{d \log \xi}{d \eta}\right|_{\eta= a/\tilde r} = -\dfrac{G m_g \tilde r}{a^2 R T}.
\end{equation*}
One further step consists in introducing the function $y(\eta)=\log \xi(\eta)$ and  the characteristic numbers
\begin{equation}\label{cn}
    \Pi_1 = \dfrac{4 \pi G \tilde r^2 \tilde \rho}{RT}, \qquad  \Pi_4=\dfrac{a}{\tilde r}, \qquad N = \dfrac{Gm_g}{a R T} .
\end{equation}
which control the mathematical problem. For details on the conceptual implications of  turning equations such as \eqref{rho_c0} into nondimensional form, we refer the reader to \cite[Section 2.2]{ESA19a}. After that step, the mathematical problem achieves the final form
\begin{subequations}\label{y_d}
\begin{alignat}{1}
& \dfrac{d^2 y}{d \eta^2} +\dfrac{2}{\eta} \dfrac{d y }{d \eta} +\Pi_1 e^y = 0,  \label{y_d_equat} \\[1mm]
& \left.\dfrac{d y}{d \eta}\right|_{\eta=0} =0, \qquad \left. \dfrac{d y}{d \eta}\right|_{\eta=\Pi_4} = -\dfrac{N}{\Pi_4}, \label{y_d_BC}
\end{alignat}
\end{subequations}
which constitutes the starting point of the present analysis.

In this paper, we concentrate on the mathematical aspects related to the governing differential equation \eqref{y_d_equat}, its boundary conditions \eqref{y_d_BC} and its solutions.
In \mbox{Section \ref{sec:3}}, we carry out a theoretical analysis of \eqref{y_d} and, among other things, we show that multiple solutions are possible. 
In \mbox{Section \ref{sec:4}}, we introduce two different discretizations to solve the density equation.
In \mbox{Section \ref{sec:5}}, we illustrate and discuss numerical tests to highlight the behavior of the solutions. 
Some concluding remarks are drawn in Section \ref{sec:6}. In the Appendix, multiplicity results for higher dimensional problems are obtained by combining the analysis of Section  \ref{sec:3} with numerical evidence.

\section{Some analytical results}
\label{sec:3}
In this section we discuss the existence and multiplicity of solutions to problem~\eqref{y_d}.
Our main tool is the following representation theorem, inspired by techniques developed in the study of \eqref{y_d_equat} under Dirichlet boundary conditions (for instance, see \cite{gel}).

\begin{theo}\label{rep}
A function $y$ solves problem~\eqref{y_d} if, and only if,
\begin{equation}\label{repr}
y(\eta)=U\left(\dfrac{\sigma}{\Pi_4} \, \eta \right)  + \ln\left(\dfrac{\sigma^2}{\Pi_1 \, \Pi_4^2 }\right) ,    \qquad \eta \in [0,\Pi_4],
\end{equation}
where $U$ is the unique maximal forward solution of the initial value problem
\begin{equation}\label{IVPprob}
\left\{
\begin{array}{l}
u''+\dfrac{2}{t} \, u' + e^{u}=0,  \\[2mm]
u'(0)=u(0)=0,
\end{array}\right.
\end{equation}
and $\sigma \in (0,\infty)$ satisfies
\begin{equation}\label{num-eq}
{\sigma} \, U'(\sigma)  = - N  .
\end{equation}
\end{theo}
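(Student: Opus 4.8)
The plan is to exploit the scaling invariance of the Lane--Emden equation \eqref{y_d_equat} to reduce the boundary value problem \eqref{y_d} to the single canonical initial value problem \eqref{IVPprob}. First I would establish that the autonomous/scaling structure of the equation is the right lens: if $U$ solves $U''+\tfrac{2}{t}U'+e^{U}=0$, then for any $\lambda>0$ the function $\eta\mapsto U(\lambda\eta)+\ln(\lambda^2)$ again satisfies an equation of the same form, with the coefficient $\Pi_1$ adjusted. Concretely, I would substitute the claimed formula \eqref{repr} into \eqref{y_d_equat} and verify by the chain rule that the two additive/multiplicative constants $\sigma/\Pi_4$ and $\ln(\sigma^2/(\Pi_1\Pi_4^2))$ are exactly what is needed to absorb the factor $\Pi_1$ and map the $t$-variable back to $\eta\in[0,\Pi_4]$. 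This is the computational heart of the ``if'' direction and is essentially a change of variables $t=(\sigma/\Pi_4)\eta$.

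Next I would check the two boundary conditions \eqref{y_d_BC}. The left condition $y'(0)=0$ follows immediately from $U'(0)=0$ in \eqref{IVPprob}, since differentiating \eqref{repr} gives $y'(\eta)=(\sigma/\Pi_4)\,U'((\sigma/\Pi_4)\eta)$ and the additive constant drops out. For the right condition, evaluating this same derivative at $\eta=\Pi_4$ yields $y'(\Pi_4)=(\sigma/\Pi_4)\,U'(\sigma)$, so the requirement $y'(\Pi_4)=-N/\Pi_4$ becomes precisely $\sigma\,U'(\sigma)=-N$, which is condition \eqref{num-eq}. Thus the scalar equation \eqref{num-eq} is exactly the residue of the outer boundary condition after the scaling is fixed, and the additive constant $\ln(\sigma^2/(\Pi_1\Pi_4^2))$ has already been consumed in matching the differential equation.

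For the converse (``only if'') direction, I would start from an arbitrary solution $y$ of \eqref{y_d} and run the substitution backwards: define $U$ on $[0,\sigma]$ by inverting \eqref{repr}, i.e.\ set $U(t)=y(\Pi_4 t/\sigma)-\ln(\sigma^2/(\Pi_1\Pi_4^2))$ for a suitable $\sigma$. The regularity at the origin guaranteed by \eqref{left_condition}/\eqref{y_d_BC} and the local theory referenced in \cite{jo-lun} ensure that the resulting $U$ satisfies \eqref{IVPprob} with the correct initial data $U(0)=U'(0)=0$; here one must be careful that the normalization $U(0)=0$ is achievable, which is exactly why the additive constant in \eqref{repr} is free and gets pinned down by this requirement. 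Uniqueness of the maximal forward solution of \eqref{IVPprob} then forces $U$ to coincide with the canonical profile, and the value of $\sigma$ is determined a posteriori by \eqref{num-eq}.

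The main obstacle I anticipate is the careful treatment of the singular point $t=0$: equation \eqref{IVPprob} has a $2/t$ term, so existence, uniqueness, and the regularity of the maximal forward solution $U$ (as well as the legitimacy of prescribing $U(0)=U'(0)=0$ rather than a full Cauchy datum) are not automatic from the standard Picard--Lindel\"of theorem and require the regular-singular-point analysis alluded to in \cite{jo-lun}. A secondary but genuine point is well-definedness: I must confirm that $\sigma$ lies in the domain of the maximal forward solution, so that $U(\sigma)$ and $U'(\sigma)$ in \eqref{num-eq} make sense; this ties the admissible range of the data $N$ to the extent of the maximal interval of existence of $U$, and is presumably where the multiplicity phenomenon enters, since \eqref{num-eq} may admit several roots $\sigma$.
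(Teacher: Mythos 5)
Your plan follows essentially the same route as the paper: verify \eqref{repr} directly for the ``if'' direction, and for the converse rescale and translate an arbitrary solution $y$ so that it satisfies the normalized data of \eqref{IVPprob}, then invoke uniqueness of the maximal forward solution. (The paper's substitution is $z(t)=y\left(e^{-y(0)/2}\Pi_1^{-1/2}t\right)-y(0)$, which is exactly your inverse of \eqref{repr} with $\sigma=e^{y(0)/2}\Pi_1^{1/2}\Pi_4$ pinned down by the normalization $z(0)=0$, as you describe.) The one step you flag but leave unresolved is settled in the paper by a short comparison argument, and it is worth recording because your guess about its role is slightly off: from $(t^2U'(t))'=-t^2e^{U(t)}<0$ and $U'(0)=0$ one gets that $U$ is decreasing, hence negative, hence $e^{U}\le 1$, hence $U(t)\ge -t^2/6$, so $U$ is \emph{global}. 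Consequently every $\sigma\in(0,\infty)$ lies in the domain of $U$, the representation \eqref{repr} makes sense for all $\sigma>0$, and the restriction on $N$ (and the multiplicity phenomenon) comes entirely from the range and oscillatory behavior of $t\mapsto tU'(t)$ in \eqref{num-eq}, not from the extent of the maximal interval of existence as you surmise. With that globality lemma supplied, your argument coincides with the paper's.
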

\begin{proof}
To begin with, note that \eqref{IVPprob} has a unique maximal forward solution~$U$,
despite being singular at $t=0$.  From $(t^{2} \, U'(t))' = - t^{2} \, e^{U(t)} < 0$ and $U'(0)=0$, it follows that $U$ is decreasing in its interval of existence; from this and $U(0)=0$, it follows that $U$ is negative throughout. This implies $U(t) \ge - {t^2}/{6}$ throughout, whence $U$ is global.

Direct calculations show that a function $y$, defined as in~\eqref{repr}, solves~\eqref{y_d}. To prove the ``only if'' part of the statement, suppose that a function $y$ solves~\eqref{y_d}.
Define
\begin{equation*}
z(t)=y\left({e^{-y(0)/2}}\,{\Pi_1^{-1/2} } \, t\right)-y(0) ,
\end{equation*}
and observe that $z$ solves~\eqref{IVPprob} in $[0,e^{y(0)/2} \, \Pi_1^{1/2}\, \Pi_4]$. Uniqueness implies $z=U$, whence
\begin{equation}\label{what}
y(\eta) = U \left(e^{y(0)/2} \, \Pi_1^{1/2} \, \eta\right) + y(0) .
\end{equation}
Since, by assumption, $y'(\Pi_4) = - N/\Pi_4$, from~\eqref{what} we get
\begin{equation*}
e^{y(0)/2} \, \Pi_1^{1/2} \, \Pi_4 \, U' \left(e^{y(0)/2} \, \Pi_1^{1/2} \, \Pi_4\right) = -N,
\end{equation*}
that is, \eqref{num-eq} is satisfied with $\sigma=e^{y(0)/2} \, \Pi_1^{1/2} \, \Pi_4$.
Solving for $y(0)$ and substituting into~\eqref{what} we get that $y$ satisfies~\eqref{repr}. \qed
\end{proof}

\bigskip
Note that, by Theorem~\ref{rep}, the profile of every solution to~\eqref{y_d} is obtained, via scalings and translations, from the profile of $U$ (see Figure~\ref{fig:U-W}, left).

%\begin{figure}
%\centerline{
%\includegraphics[width=.9\textwidth]{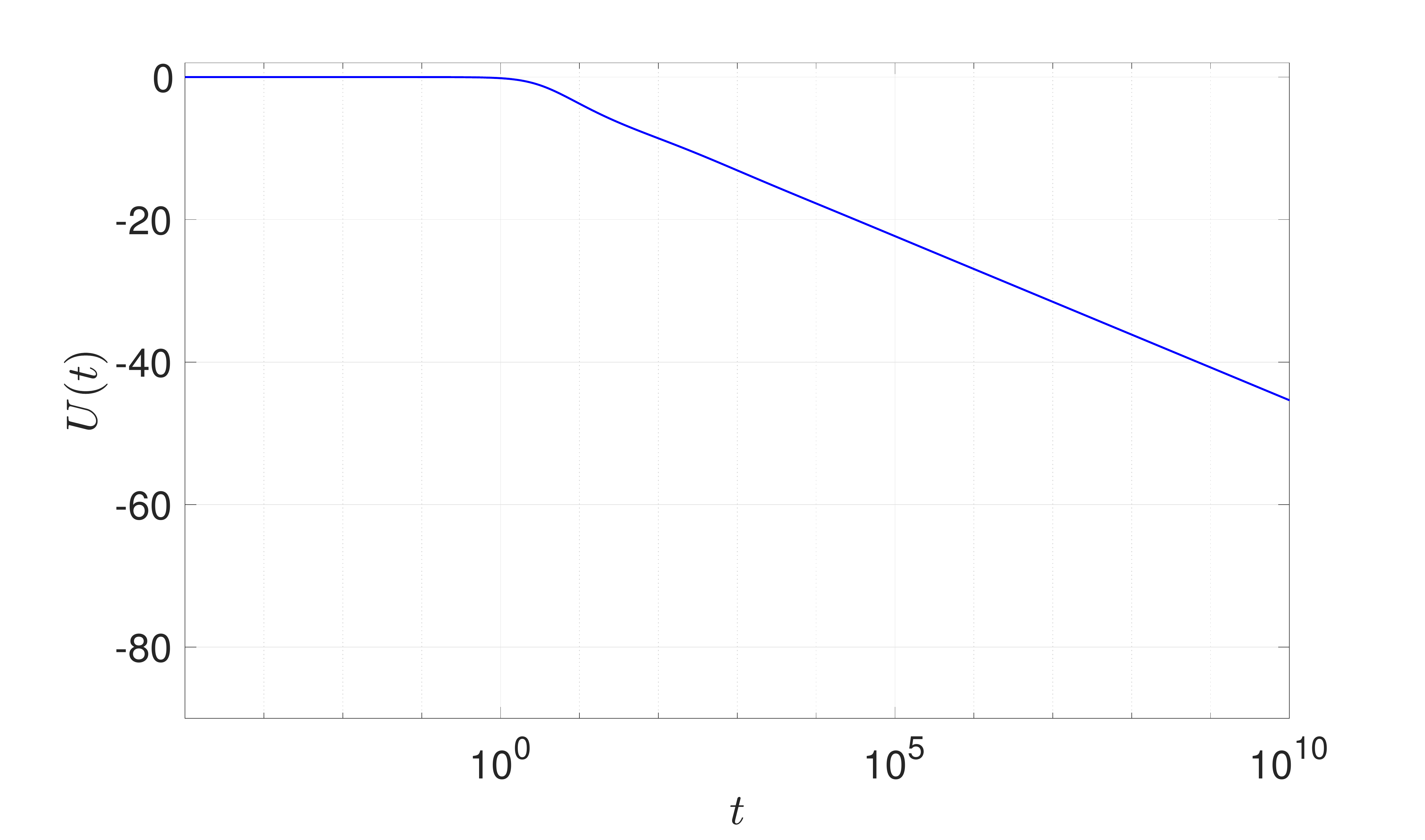}}
%\caption{Profile of $U$
%\label{fig:rel1}}
%\end{figure}

\begin{figure}
\centerline{
\includegraphics[width=.56\textwidth]{figura01.eps} \hspace*{-.8cm}
\includegraphics[width=.56\textwidth]{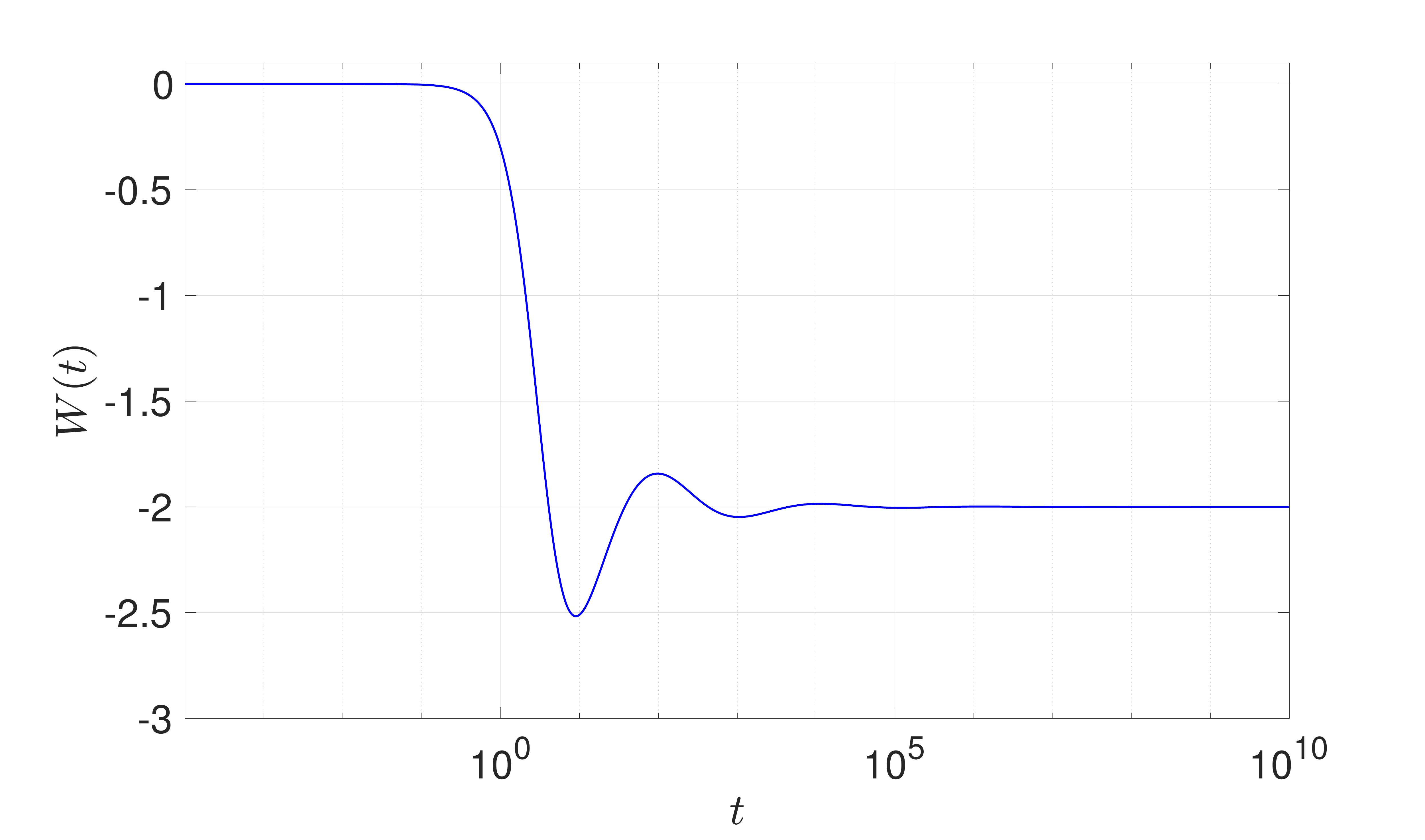}
}
\caption{Profiles of the functions $U(t)$ (left) and  $W(t)$ (right) on the interval $[10^{-1}, 10^5]$, using a base $10$ logarithmic scale for the $x$-axis.}
\label{fig:U-W}
\end{figure}
Moreover,  solutions to~\eqref{y_d} are in one-to-one correspondence with positive solutions  of the numerical equation
\begin{equation} \label{Nequ}
W(t) = -N ,
\end{equation}
where $W:= t \mapsto t \, U'(t)$.
%\begin{equation}
%\label{W}
%W:= t \mapsto t \, U'(t)  .
%\end{equation}

A qualitative description of  $W$
and its asymptotic behavior can be obtained by means of classic dynamical systems techniques. This was done, for instance, in~\cite{chandr,emden,gel,jo-lun}.
Indeed, with the change of variables
\begin{equation*}
v(s)=u(e^s)+2\, s \, , \quad s \in \RR ,
\end{equation*}
problem~\eqref{IVPprob} turns into the second-order autonomous problem
\begin{equation}\label{auto}
\left\{ \begin{array} [c]{l}
 v'' + v' + e^v - 2 = 0, \\[2mm]
\displaystyle \lim_{s\to -\infty}v(s) =-\infty, \\[2mm]
\displaystyle  \lim_{s\to -\infty}v'(s) = 2.
\end{array}
\right.
\end{equation}
Phase-plane analysis shows that the derivative of the solution of~\eqref{auto}
oscillates about and tends to $0$ as $s\to\infty$.
Returning to the original variables, this means that $W$
oscillates about and tends to $-2$ as $t \to \infty$.

As a modern alternative to the argument above, standard techniques of numerical integration, directly applied to the initial value problem~\eqref{IVPprob}, yield the profile of $W$ (see Figure \ref{fig:U-W}, right).
%\begin{figure}
%\centerline{
%\includegraphics[width=.9\textwidth]{figura02.eps}}
%\caption{A plot of the function $W(\eta)$ in the interval $[10^{-1}, 10^5]$, using a base $10$ logarithmic scale for the $x$-axis.}
%\label{profiloW}
%\end{figure}
Both approaches lead to the following result.
\begin{theo}\label{main}
There exist $N_1, N_2 \in (0,\infty)$, with $N_1 < 2 < N_2$, such that
\begin{description}
\item[(a)] problem~\eqref{y_d} has solutions if and only if $N \le N_2$;
\item[(b)] problem~\eqref{y_d} has a unique solution if and only if $0< N < N_1$;
\item[(c)] the number of solutions to problem~\eqref{y_d} tends to infinity as $N \to 2$.
\end{description}
\end{theo}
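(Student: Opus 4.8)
The plan is to translate the statement entirely into a question about the graph of the function $W$ and its intersections with the horizontal lines $\{y=-N\}$, using the one-to-one correspondence \eqref{Nequ} between solutions of~\eqref{y_d} and positive roots $t$ of $W(t)=-N$. The whole theorem then reduces to counting these roots as $N$ ranges over $(0,\infty)$, so that (a)--(c) become three statements about how often a horizontal line meets the graph of $W$.

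First I would pin down the global shape of $W$. Since $U$ is decreasing and negative on $(0,\infty)$ (established in the proof of Theorem~\ref{rep}), we have $W<0$ there, while $W(0^{+})=0$. Passing to the autonomous system~\eqref{auto} via $v(s)=U(e^{s})+2s$, one computes $W(e^{s})=v'(s)-2$; the unique equilibrium is $(\ln 2,0)$, and linearization yields eigenvalues $(-1\pm i\sqrt7)/2$, i.e.\ a stable focus. Hence the trajectory spirals into $(\ln 2,0)$: $v'$ changes sign infinitely often and tends to $0$, so $W$ oscillates about $-2$, producing infinitely many local maxima strictly above $-2$ and infinitely many local minima strictly below $-2$, all converging to $-2$. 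A short computation from $(t^{2}U')'=-t^{2}e^{U}$ shows that $W$ is strictly decreasing on an initial interval $(0,t_{1})$ down to its first local minimum.

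The heart of the matter is the strict monotonicity of the two sequences of turning values: I would show that the successive maxima $M_{1}>M_{2}>\cdots\downarrow -2$ strictly decrease and the successive minima $m_{1}<m_{2}<\cdots\uparrow -2$ strictly increase, so that $m_{1}$ is the global minimum of $W$ and $M_{1}$ is the largest value $W$ attains after $t_{1}$. I then set $N_{2}:=-m_{1}$ and $N_{1}:=-M_{1}$; since $m_{1}<-2<M_{1}<0$ this gives $0<N_{1}<2<N_{2}$ at once. The intended tool is the Lyapunov function $E(v,v')=\tfrac12(v')^{2}+e^{v}-2v$, which satisfies $\dot E=-(v')^{2}\le 0$ along trajectories of~\eqref{auto}; monotone dissipation of $E$, read off at the points of the nullcline $v'=2-e^{v}$ where the turning points sit, is what should force the decreasing amplitude. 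This damping step is the main obstacle, because the coupling of $v$ and $v'$ at the turning points means monotonicity of $E$ does not transfer directly to monotonicity of the extremal values of $v'$; as the surrounding text suggests, the phase-plane argument here can be buttressed by the numerical profile of $W$ in Figure~\ref{fig:U-W}.

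Granting the monotone-amplitude picture, the three conclusions follow by inspecting the graph. For (a): the range of $W$ on $(0,\infty)$ is exactly $[m_{1},0)=[-N_{2},0)$, so $W(t)=-N$ is solvable iff $0<N\le N_{2}$. For (b): when $0<N<N_{1}$ the line $y=-N$ lies above $M_{1}=\sup_{t\ge t_{1}}W(t)$, so it meets the graph only once, on the strictly decreasing branch over $(0,t_{1})$, giving a unique root; at $N=N_{1}$ the first maximum contributes a second (tangential) root, and for $N\ge N_{1}$ one always finds at least two roots, whence uniqueness holds iff $N<N_{1}$. For (c): for $N<2$ near $2$ the number $p$ of maxima with $M_{k}>-N$ tends to infinity (all $M_{k}>-2$ and $M_{k}\to -2$), and tracing the graph yields exactly $2p+1$ roots; symmetrically, for $N>2$ near $2$ the number $q$ of minima with $m_{k}<-N$ tends to infinity and yields $2q$ roots. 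In either case the count diverges as $N\to 2$, which is~(c).
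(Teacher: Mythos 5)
Your proposal follows essentially the same route as the paper: reduce via \eqref{Nequ} to counting roots of $W(t)=-N$, pass to the autonomous system \eqref{auto}, and read off (a)--(c) from the damped oscillation of $W$ about $-2$. In fact the paper offers nothing beyond this sketch --- it asserts that phase-plane analysis, or alternatively numerical integration of \eqref{IVPprob}, yields the profile of $W$ and that ``both approaches lead to'' Theorem~\ref{main}, with $N_1,N_2$ then computed numerically --- so your eigenvalue computation at the focus, the Lyapunov function $E$, and the explicit intersection counts already supply more detail than the published argument, and the step you honestly flag as unproved (strict monotonicity of the sequences of maxima and minima of $W$) is equally unproved in the paper. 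Note that you can sidestep that step entirely: define $N_2:=-\inf W$ and $N_1:=-\sup_{t\ge t_1}W(t)$; both are attained because $W\to-2$ while taking values on both sides of $-2$, and your counting argument then goes through without knowing which extremum is the largest. One caveat you share with the paper: at $N=N_2$ the line $y=-N_2$ meets the graph only at the global minimum, so that value of $N$ also gives a unique solution and the ``only if'' in (b) fails at that single point unless the fold is counted as a double root; your claim that ``for $N\ge N_1$ one always finds at least two roots'' is false precisely there.
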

Accurate values of $N_1$ and $N_2$ have been obtained by exploiting the pathfollowing algorithm presented in Section \ref{sec:5}, as well as by solving problem \eqref{IVPprob} in Matlab with the aid of some available ODE solvers coupled with the event location tool to detect when the derivative of $W(t)$ vanishes during the computation of the numerical solution.  All these numerical computations show that $N_1 \approx 1.8427$ and $N_2 \approx 2.5175$.

\section{Discretization}
\label{sec:4}

In the sequel, we will assume $\Pi_1=\Pi_4=1$ in \eqref{y_d}, thus obtaining the simplified problem
\begin{subequations}\label{Nprob}
\begin{alignat}{1}
& y''+\frac{2}{\eta} \, y' +e^{y}=0, \label{equat} \\[1mm]
& y'(0)=0, \label{symm} \\[1mm]
& y'(1)=-N. \label{neum}
\end{alignat}
\end{subequations}
This choice of the parameters entails no loss of generality: if $y$ solves~\eqref{Nprob}, then the function
\begin{equation*}
\widetilde y(\eta) = y(\eta/\Pi_4) - \log (\Pi_1 \, \Pi_4^2) , \quad \eta \in [0,\Pi_4] ,
\end{equation*}
solves \eqref{y_d}. (See the Appendix for more general invariance properties of \eqref{y_d}.) From a computational point of view, the numerical schemes we  introduce to compute approximated solutions work well for this choice of the parameters.
\begin{rem}
In view of Theorem \ref{rep}, a numerical approximation to the solution of (\ref{Nprob}) could be in principle obtained by solving the algebraic equation (\ref{Nequ}). Although successful for theoretical purposes, this strategy brings out a number of difficulties when approaching the problem numerically. When $N$ is close to the limit value $2$, equation (\ref{IVPprob}) has to be solved on a sufficiently large integration interval that encloses all the  solutions of (\ref{Nequ}). However, the stiff nature of  problem (\ref{IVPprob}) for large value of $t$  poses  accuracy issues.   It turns out that equation (\ref{Nequ}) is actually affected by a perturbation whose size increases with $t$. On the other hand, since $W'(t) \rightarrow 0$ as $t$ increases (see Figure \ref{fig:U-W}, right),  the conditioning number of the problem also increases with $t$, thus preventing an accurate approximation of the solutions of  (\ref{Nequ}).
\end{rem}
We have solved problem \eqref{Nprob} by means of two  different numerical approaches described in the subsections below. A comparison of the results has allowed us to prevent possible discrepancies in the numerical approximations that might emerge after the discretization process. 

Both techniques need an initial guess of the solution to be used as starting point in the iterative procedure associated with the numerical method employed to solve (\ref{Nprob}). A consistent initial profile mimicking the shape of the solution has been obtained by means of a polynomial of degree two 
\begin{equation}
\label{initcond}
P(\eta)=a\eta^2+b\eta+c.
\end{equation} 
Imposing the boundary conditions \eqref{symm}-\eqref{neum} yields
\begin{equation}
\label{initcond1}
0=P'(0)=b, \qquad -N=P'(1)=2a+b \Rightarrow a=-\frac{N}{2},
\end{equation} 
thus $P(\eta)=-\frac{N}{2}\eta^2+c$. The remaining free coefficient $c$ has been computed  following two different strategies, as described below. 

To begin with, note that the characteristic numbers defined in \eqref{cn} satisfy the equality
\begin{equation}\label{total_mass1}
\int_0^{\Pi_4} \xi(\eta) \eta^2 \mathrm{d} \eta = N \dfrac{\Pi_4}{\Pi_1},
\end{equation} 
which follows from the expression of the gas mass
\begin{equation*}
%\label{total_mass}
    m_g = 4 \pi \int_{0}^a  \rho(u)u^2\; \mathrm{d} u
\end{equation*}
after introducing the change of variables in \eqref{scal}. With $\Pi_1=\Pi_4=1$, \eqref{total_mass1} reduces to
\begin{equation}
\label{total_mass2}
\int_0^{1} \xi(\eta) \eta^2 \mathrm{d} \eta = N.
\end{equation}
%\end{rem}

\begin{itemize}
\item[(a)] 
%\begin{rem}\label{rem1}
By imposing that $e^{P(\eta)}$ satisfies \eqref{total_mass2}, we get
$$
N=\int_0^1 \eta^2e^{P(\eta)}\mathrm{d}\eta~=~e^c\int_0^1 \eta^2e^{-\frac{N}{2}\eta^2}\mathrm{d}\eta,
$$
whence 
\begin{equation}
\label{initcond2a}
c=\log\left(\frac{N}{\int_0^1 \eta^2e^{-\frac{N}{2}\eta^2}\mathrm{d}\eta}\right).
\end{equation} 
The integral in \eqref{initcond2a} can be easily computed by a quadrature formula.
\item[(b)]Since \eqref{total_mass2} implies 
$\int_0^1  \eta^2 \left(\xi(\eta) - 3N \right) \mathrm{d} \eta = 0$, there exists $\eta_0 \in [0,~1] $ such that
$\xi(\eta_0) = 3N$.
For example, assuming $\eta_0=\frac{1}{2}$, we get  
$3N=e^{P(\frac{1}{2})}=e^{-\frac{N}{8}}e^c$, 
whence
\begin{equation}
\label{initcond2b}
c~=~\frac{N}{8}+\log(3N).
\end{equation} 
\end{itemize}

\subsection{HOFiD code}
\label{subsec:4.1}
The MATLAB code HOFiD (see~\cite{ASjnaiam,ASaip}) is based on high order (up to 10) finite difference schemes to solve general second order ODEs,
\begin{equation} \label{eq:twpbvp}
f(\eta,y,y',y'') = 0, \quad \eta \in [a,b],
\end{equation}
subject to Neumann, Dirichlet or mixed boundary conditions. The ideas underlying this technique have also been  adapted to solve Sturm--Liouville and multi-parameter problems \cite{ALSWjamc,ALSWcpc}. 

To explain the procedure, let us, for simplicity, consider the interval $[a,b]$ uniformly subdivided into $n$ subintervals by means of $n+1$ equispaced grid points $\eta_i = a+ih$, $i = 0, \dots, n$, with the stepsize $h = (b-a)/n$. The derivatives at $\eta_i$ are approximated  as
\begin{equation} \label{eq:derapp}
y^{(\nu)}(\eta_i) \approx y_i^{(\nu)} = \frac{1}{h^{\nu}}
\sum_{j=-s_i}^{r_i} \alpha_{j+s_i}^{(\nu)} y_{i+j}, \quad \nu = 1, 2, \quad i=1,\dots,n{-}1,
\end{equation}
where the coefficients $\alpha_j^{(\nu)}$ involved in each formula are computed to guarantee the highest possible order of consistency.

When \eqref{eq:twpbvp} is coupled with  Dirichlet boundary conditions, we obtain a closed system with $n{-}1$ nonlinear algebraic equations
\begin{equation} \label{bvp:cond}
F(x_i,y_i,y_i',y_i'') = 0, \quad i = 1,\dots, n{-}1,
\end{equation}
for the $n{-}1$ unknowns $y_i$, $i = 1, \dots, n{-}1$.
In case of a Neumann boundary condition in $a$ (we proceed similarly in $b$), the value $y_0$ is considered as an unknown and the equation
\begin{equation*} 
\frac{1}{h}\sum_{j=0}^{r_0} \alpha_{j}^{\prime} y_i = y_0^{\prime} = y^{\prime}(a)
\end{equation*}
is added to \eqref{bvp:cond}.

The values $r_i$ and $s_i$ in (\ref{eq:derapp}) may be chosen according to the value of the index $i$. For the second derivative the choice $r_i=s_i$ leads to symmetric formulae of order $2r_i$. For the first derivative we choose $r_i=s_i\pm 1$, depending on the sign of the term multiplying $y_i'$, in accordance with the upwind strategy wich improves the stability domain.

These choices are not always possible but depend on the available points to the left and to the right of $i$.
For this reason, at the initial grid points $\eta_i$,  $i< \frac{p}{2}$, we choose $s_i=i$ and $r_i=p-s_i+\nu-1$, since $\frac{p}{2}$ values are not available on the left of $\eta_i$. Analogously, for $i>n-\frac{p}{2}$, we choose $r_i=n-i$ and $s_i=p-r_i+\nu-1$. Hence, in a block form each derivative is approximated by $Y^{(\nu)} = A_{\nu} Y$, where $A_{\nu}$ is a banded matrix \cite{ASjcam}.

For regular second order BVPs for ODEs with appropriately smooth solution, the global order of the above finite difference scheme coincides with the order of consistency \cite{ASjnaiam}.

The code employs fixed even orders from 2 to 10 (order 2 corresponds to the classical centered differences),  and variable stepsize based on an equidistribution of the error obtained by exploiting the solution yielded by two consecutive order formulae.

\subsection{Codes for first order BVPs}

A more standard approach consists in recasting the second order differential equation \eqref{equat}  as a first order system and then apply a general-purpose code for boundary-value problems. Setting $(z_1,z_2)^T = (y, y')^T$, equation \eqref{equat} assumes the equivalent  form
\begin{equation}
\label{first_order_problem}
\begin{array}{l}
z'_1(\eta) = z_2(\eta),\\[.3cm]
\displaystyle z'_2(\eta) =  - \frac{2}{\eta} z_2(\eta) +  e^{z_1(\eta)} ,
\end{array}
\end{equation} 
with $\eta \in[0,1]$, and coupled with the  boundary conditions
\begin{equation}
\label{f_o_bc}
z_2(0) = 0, \qquad z_2(1) = - {N},
\end{equation} 
To avoid the singularity, at $\eta= 0$ we use the equations
\begin{equation*}
z'_1(\eta) = z_2(\eta), \qquad z'_2(\eta) =   - \frac{e^{z_1(\eta)}}{3},
\end{equation*} 
which have been derived by considering the  first derivative of the equation
$$
\eta y''(\eta) + 2 y'(\eta) - \eta e^{y(\eta)} = 0 
$$
computed at $\eta = 0$.

The  first order BVP \eqref{first_order_problem}-\eqref{f_o_bc} has been solved in MATLAB by using the  codes {\tt bvptwp} \cite{ACMBVPTWP}, {\tt tom }\cite{TOM}, and the codes {\tt bvp4c} and {\tt bvp5c} of the MATLAB release R2014b. The same problem has also been solved  in R by using  the solver  {\tt bvptwp}  belonging to the R-package {\tt bvpSolve} \cite{bvpSolve}.
The codes {\tt bvptwp} and   {\tt tom } make use of a similar hybrid mesh selection strategy based on conditioning, which allows to get output information about the conditioning of the continuous problem \cite{CONDMESH1,CONDMESH2}.  

In order to obtain a solution in the lowermost branch of the bifurcation diagram in Figure \ref{fig:diagrams} (left plot), we have considered an  initial  mesh of 101  equispaced meshpoints, with a constant stepsize equal to  0.01, and an initial guess as in \eqref{initcond}-\eqref{initcond1}-\eqref{initcond2b}.

All the methods are able to find a solution  for  $N \le N_2 \simeq 2.5175513$. However, the input absolute and relative tolerances have been set equal to $10^{-8}$ since, with higher tolerances,  the two codes {\tt bvp4c} and  {\tt bvp5c}  would give a solution in correspondence to values of $N$ for which the continuous problem   admits no solutions. 

The condition number of the problem is reported in Table \ref{tab_cond} for different values of $N$,  showing that the problem is in general well  conditioned. As is expected, the condition number grows when $N$  approaches the limit value  $N_2$.  \begin{table}
\centerline{ 
\begin{tabular}{c|lllllll}
N & 0.5  & 1    & 1.5   & 2     & 2.5   & 2.51  &  2.5175513  \\		
\hline
cond. & 3.87 & 2.74 &  2.64 & 4.04  & 44.02 & 97.04 &  $4.61 \cdot 10^{4}$  
\end{tabular} }
\caption{Condition numbers for different values of $N$}
\label{tab_cond}
\end{table}

%\begin{table}
%	\centerline{ 
%		\begin{tabular}{cc}
%			N & cond \\
%			\hline 
%			0.5 &  3.87 \\
%			0.75 & 3.07\\
%			1 & 2.74 \\
%			1.25 & 2.62 \\
%			1.5 & 2.64 \\
%			1.75 & 2.89\\
%			2 &   4.04\\
%			2.25 &  6.91 \\
%			2.5 & 44.02 \\
%			2.51 & 97.04 \\
%			2.5175513 & $4.61 \cdot 10^{4}$
%		\end{tabular} }
%		\caption{Condition numbers for different values of $N$}
%		\label{tab_cond}
%	\end{table}

\section{Numerical results}
\label{sec:5}
In view of Theorems \ref{rep} and {\ref{main}}, which show how the parameter $N$ affects the existence and  multiplicity of the static configurations of the physical model, the most relevant question to be addressed from a numerical viewpoint may be stated as follows:
\begin{quote} 
{\em
find out (possibly all)  the isolated solutions of (\ref{Nprob}) corresponding to  a given value of the parameter $N \le N_2$ (existence condition).} 
\end{quote}

Assume that the nonlinear system obtained after one of the discretization procedures illustrated in Section \ref{sec:4} has dimension $n+1$ and generates an approximation $y=(y_0,y_1,\dots,y_n)^\top$ to the true solution, where $y_0\approx y(0)$ and $y_n\approx y(1)$. Then, the dependence of the solution $y$  on the parameter $N$ in  (\ref{Nprob}) may be visualized, in the parameter space $\RR^{n+2}$, as a one-dimensional curve  forming a branch of solutions. The occurrence of multiple solutions prevents us from using a simple continuation argument on the parameter $N$ to determine the complete bifurcation diagram. 

To highlight this difficulty one can, for example, resort to the projection of the curve onto the plane $(y_0,N)$ or the plane $(y_n,N)$. The resulting diagrams are reported in Figure \ref{fig:diagrams} and reveal the presence of turning points where ${dy_0}/{dN}\rightarrow \infty$ (left plot) or, equivalently, ${dy_n}/{dN}\rightarrow \infty$ (right plot). Table \ref{tab_cond} accounts for the difficulty of solving (\ref{Nprob}) in the vicinity of $N_2$. Similar ill-conditioning issues arise when $N$ is close to any subsequent singular points, which makes the numerical treatment of (\ref{Nprob}) challenging.
\begin{figure}
\centerline{
\includegraphics[width=.56\textwidth]{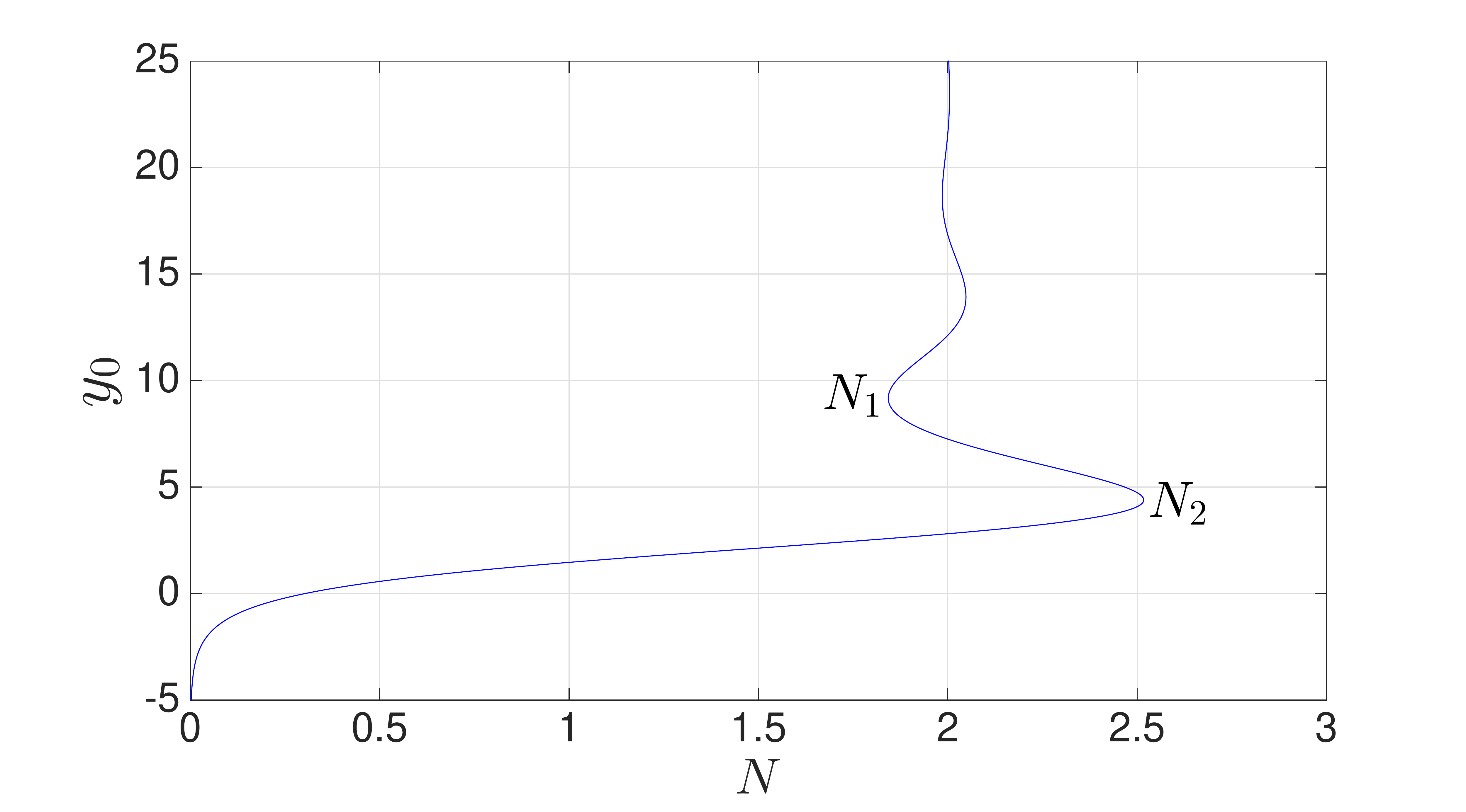} \hspace*{-.8cm}
\includegraphics[width=.56\textwidth]{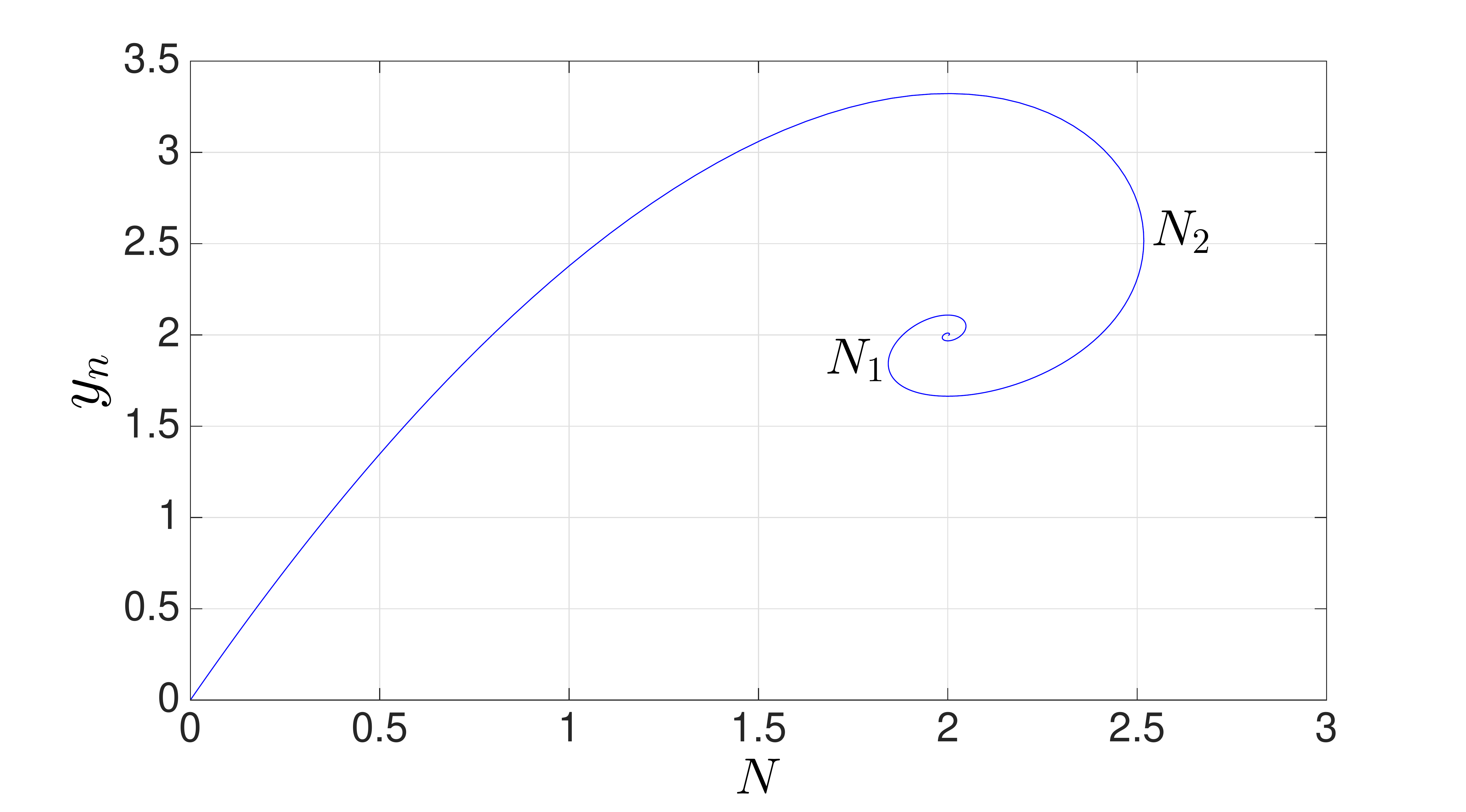}
}
\caption{Bifurcation diagrams showing the dependence of the solution of problem (\ref{Nprob}) on the parameter $N$. Specific continuation techniques are required to jump over the turning points.  \label{fig:diagrams}}
\end{figure}

The  branch starting at $N=0$ and ending at $N=N_2$ contains the configurations which have been recognized as thermally stable in \cite{ESA19a}. We have experienced that the corresponding solutions are those obtained by the codes we have employed starting with generic initial guesses as those illustrated at the beginning of Section \ref{sec:4}. According to \cite{ESA19a}, the other branches, for example the one starting at $N_2$ and ending at $N_1$, contain thermally unstable configurations. To get the corresponding numerical approximations we need very accurate initial guesses.  Consequently, a suitable numerical continuation technique must be implemented to jump over the turning points and search for the solutions which lie on other, thermally unstable branches. 

The left picture in Figure \ref{fig:diagrams} suggests that a natural way to avoid turning points could be using $y_0$, instead of $N$, as a parameter. This would change problem (\ref{Nprob}) into an initial value problem with initial conditions $y_0:=y(0)\in [0,+\infty)$ and $y'_0:=y'(0)=0$. However, we did not follow this approach since, as was outlined above in the definition of the task, we are interested in computing solutions corresponding to a prescribed value of $N$. Therefore, we should couple the initial value problem with a suitable shooting technique; this would be less efficient than employing a continuation strategy involving the parameter $N$ directly. In addition, we observe that for large values of $y_0$ the problem becomes stiff and requires a large number of mesh points. 

The numerical continuation strategy we have adopted may be thought of as a compromise between solving the original problem (\ref{Nprob}) and the initial value problem described above. It  consists of a parametrization involving both $y_0$ and $N$ and defined as follows:

\begin{itemize}
\item[(1)] Set $N^{(1)}:=N$ and compute the first solution $y^{(1)}$ of (\ref{Nprob}) corresponding to the point $z_1:=(N,y_0^{(1)})$ in the lower branch of the left picture in Figure \ref{fig:diagrams}. If $N<=N_2$, this solution always exists.
\item[(2)] Set $N^{(2)}:=N+\Delta N$, with $\Delta N$ small enough, and compute the solution $y^{(2)}$ of (\ref{Nprob}) with boundary condition $y'(1)=-N^{(2)}$. Set $z_2:=(N^{(2)},y_0^{(2)})$. 
\item[(3)] \underline{Predictor}: for a given, small enough stepsize $\delta>0$, compute an initial guess $w$ for the new point on the branch as 
\begin{equation*}
%\label{z1z2w}
w:=z_2+\delta v, \qquad \mbox{with }~ v:=\frac{z_2-z_1}{||z_2-z_1||_2}. 
\end{equation*}
\item[(4)] \underline{Corrector}: Solve (\ref{equat})-(\ref{symm}) and  replace (\ref{neum}) with the following two extra conditions 
\begin{equation}
\label{new_bc}
y(0)=w(2)+cv(1),\qquad -y'(1)=w(1)-cv(2),
\end{equation}
which, removing the extra unknown $c\in \RR$, simplifies as
\begin{equation}
\label{new_bc1}
v(2)y(0)-v(1)y'(1)= v(1)w(1)+v(2)w(2).
\end{equation}

\item[(5)]  If $y=(y_0,\dots,y_n)^\top$  is the resulting numerical solution and $y'=(y'_0,\dots,y'_n)^\top$ contains the corresponding approximations of the first derivatives given in output from the previous step,  set 
$$
(N^{(1)}, y_0^{(1)}):=(N^{(2)}, y_0^{(2)}) \qquad \mbox{and} \qquad (N^{(2)}, y_0^{(2)}):=(-y'_n, y_0).
$$ 
\noindent Then go to step (3) to advance the approximation of the branch.
\end{itemize}
A few comments elucidating the geometric meaning of the above steps are in order. Step (1) computes the basic solution belonging to the thermally stable branch. As was mentioned above, if $N$ is sufficiently away from the limit value $N_2$, the codes  easily  solve problem (\ref{Nprob}) directly. Should $N$ be close to $N_2$, we could start with selecting a lower value for $N^{(1)}$ and then reach $N$ by continuation, as illustrated in the subsequent steps. 

Step (2) computes a solution close to but different from the one obtained at the previous step.  This further approximation is needed  to start the secant predictor-corrector method employed at steps (3) and (4).

At step (3) we predict the next point on the branch by linear extrapolation based on the last two computed solutions. Notice that $v$ is the unitary vector defining the direction of the line joining $z_1$ and $z_2$, in the $(N,y_0)$-plane.

Step (4) implicitly corrects the initial guess $w$ by solving a modified boundary value problem with (\ref{neum})  replaced by (\ref{new_bc1}), which comes from (\ref{new_bc}). Here, the couple $(w(1)-cv(2),w(2)+cv(1))$, with $c\in \RR$,  locates a generic point on the line orthogonal to $v$ and passing through the point $w$, in  the $(N,y_0)$-plane. As initial guess for the solver we use the configuration obtained at the previous step which, for $\delta$ small enough, assures a fast convergence of the nonlinear iteration scheme.  

The numerical continuation technique summarized at steps (3)-(4) is iterated and, if needed, the value of  the stepsize $\delta$ at step (3) is tuned in order to assure that the numerical continuation procedure may successfully compute the current branch, go beyond the corresponding turning point and continue with approximating the  subsequent branch in the bifurcation diagram. During the computation of this new branch, two different situations may occur:
\begin{itemize}
\item[-] At a certain step on the new branch, the following event is recognized: $N\in [\min\{N^{(1)}, N^{(2)}\},\max\{N^{(1)}, N^{(2)}\} ]$. If the last used stepsize $\delta$ is small enough, the code will be able to solve the original problem (\ref{Nprob}) directly using one of the two last computed solutions as initial guess. After this new solution has been obtained, the continuation algorithm is started again to reach a new branch. 
\item[-] A new turning point is achieved without meeting the event described above. In view of the shape of the bifurcation diagram (see the left picture in Figure \ref{fig:diagrams}), the algorithm may be stopped since no new solutions may be attained in the subsequent branches. 
\end{itemize}

Concerning the choice of the parameter $\delta$ at step (3) of the pathfollowing algorithm, an automatic stepsize variation strategy is mandatory to reproduce a correct shape of the bifurcation diagram, thus  preventing that turning point might be skipped. In more detail, denoting by $\delta_k$ the value assumed by the parameter $\delta$ at step $k$ and by $n_k$ the number of iterations needed by the solver of the nonlinear algebraic equation associated with the discrete problem, the following scheme has been implemented:
\begin{itemize}
\item[(a)] Set $\delta_n=\delta_{n-1}$.
\item[(b)] Solve the discrete problem.
\item[(c)] If $n_k>n_{\max}$ or convergence is not achieved, then set $\delta_n=\max\{\delta_n/2, \delta_{\min}\}$. In particular, in case of convergence failure, go back to step (b). 
\item[(c)] If $n_k<n_{\min}$  then set $\delta_n=\min\{2\delta_n,\delta_{\max}\}$.
\end{itemize}
Here, we constrain $\delta_n$ to lie inside a suitable interval $[\delta_{\min},\delta_{\max}]$ to smoothly capture the correct curvature of the branches in a neighbourhood of turning points.\footnote{Actually, the value $\delta_{\min}$, below which the procedure would be stopped returning a failure flag, has never been attained in the experiments included in the paper.} We emphasize that the above procedure, which turns out to be successful for the problem at hand, is mainly addressed at guaranteeing a (fast) local convergence of the iteration scheme towards the solution closest to the initial guess. In fact, we have experienced that, as we move along the upper branches of the bifurcation diagrams, the parameter $\delta_n$ is gradually reduced to ensure that the initial profile of the numerical solution be inside the basin of attraction of the Newton iteration scheme (for a  more sophisticated numerical pathfollowing technique see, e.g., \cite{deficu}).

Hereafter, we illustrate the output of the continuation algorithm coupled with one of the two solvers for BVPs described in Section \ref{sec:4}. Since these two codes yield essentially the same results, we only report the results for the former, namely the HOFiD code described  in Section \ref{subsec:4.1}.\footnote{The use of the latter code and a comparison between the corresponding numerical approximations turned out to be very useful in assessing the correctness of the obtained results.} For such a code, in the following test problems,  we used the default order 6 for the methods, error tolerance $10^{-8}$ as the required accuracy in the numerical approximation, and 21 initial equispaced points to solve the first problem. Moreover, we fixed $\delta_0=0.2$ as starting value of the parameter $\delta$ needed by the continuation strategy.
Besides the results illustrated below, the continuation technique allowed us to compute $N_2\simeq 2.51755148$ as the largest value of $N$ (with 8 significant digits) for which a solution exists (see Figure \ref{fig:diagrams} and Theorem \ref{main}).

\subsection{Test \#1}  
As a first, simpler example, we set $N=1.9$ and let the code compute all the corresponding solutions to (\ref{Nprob}). To start the continuation procedure, we compute the solutions belonging to  the lower branch of the bifurcation diagram associated with the values $N=1.8$ and $N=1.9$, respectively. %We select $\delta = 1$ as initial value for the increment $\delta$ in  (\ref{z1z2w}) and start the predictor-corrector scheme. 

\begin{figure}[htb]
\centerline{
\includegraphics[width=.56\textwidth]{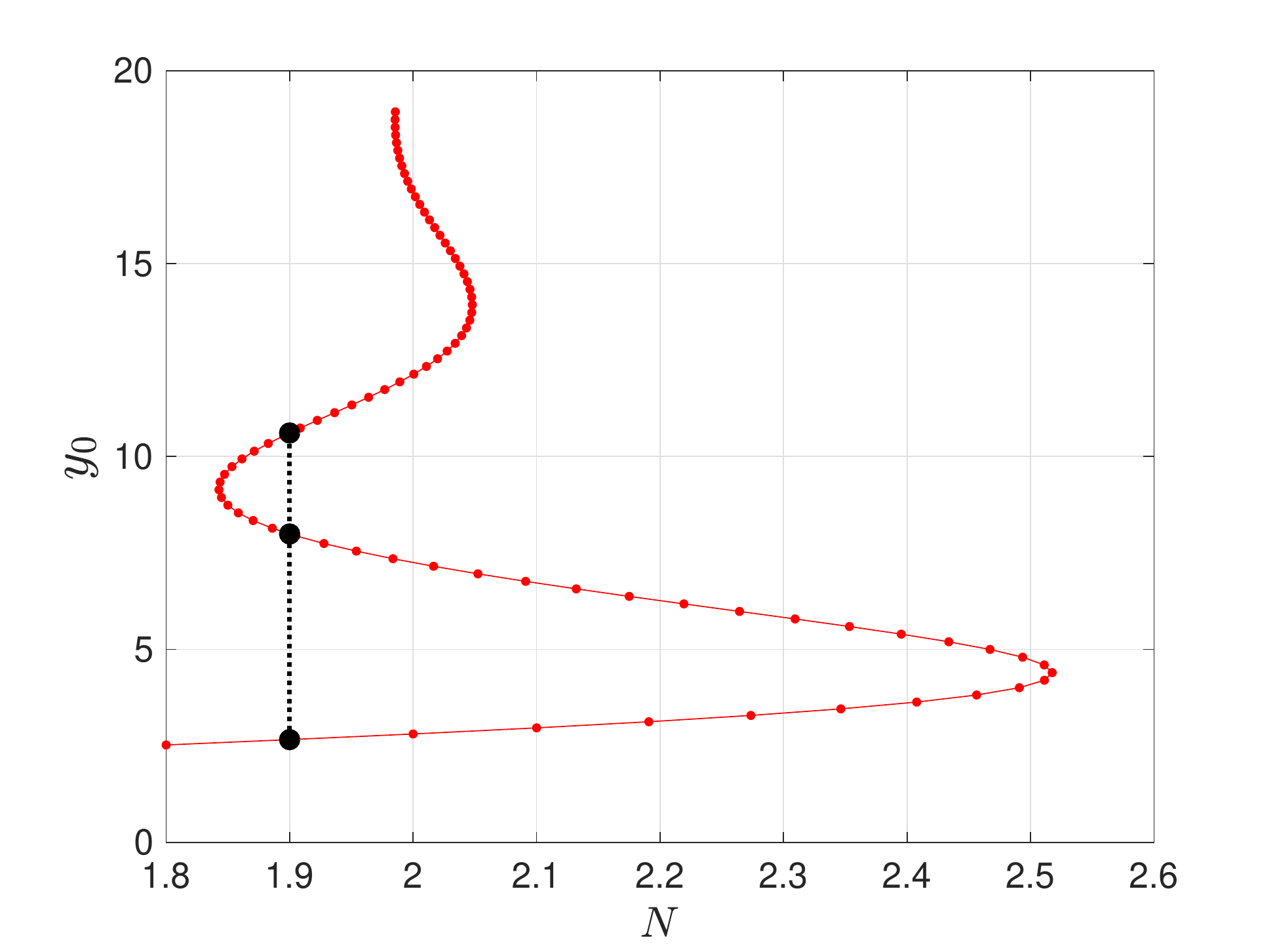} \hspace*{-.8cm}
\includegraphics[width=.56\textwidth]{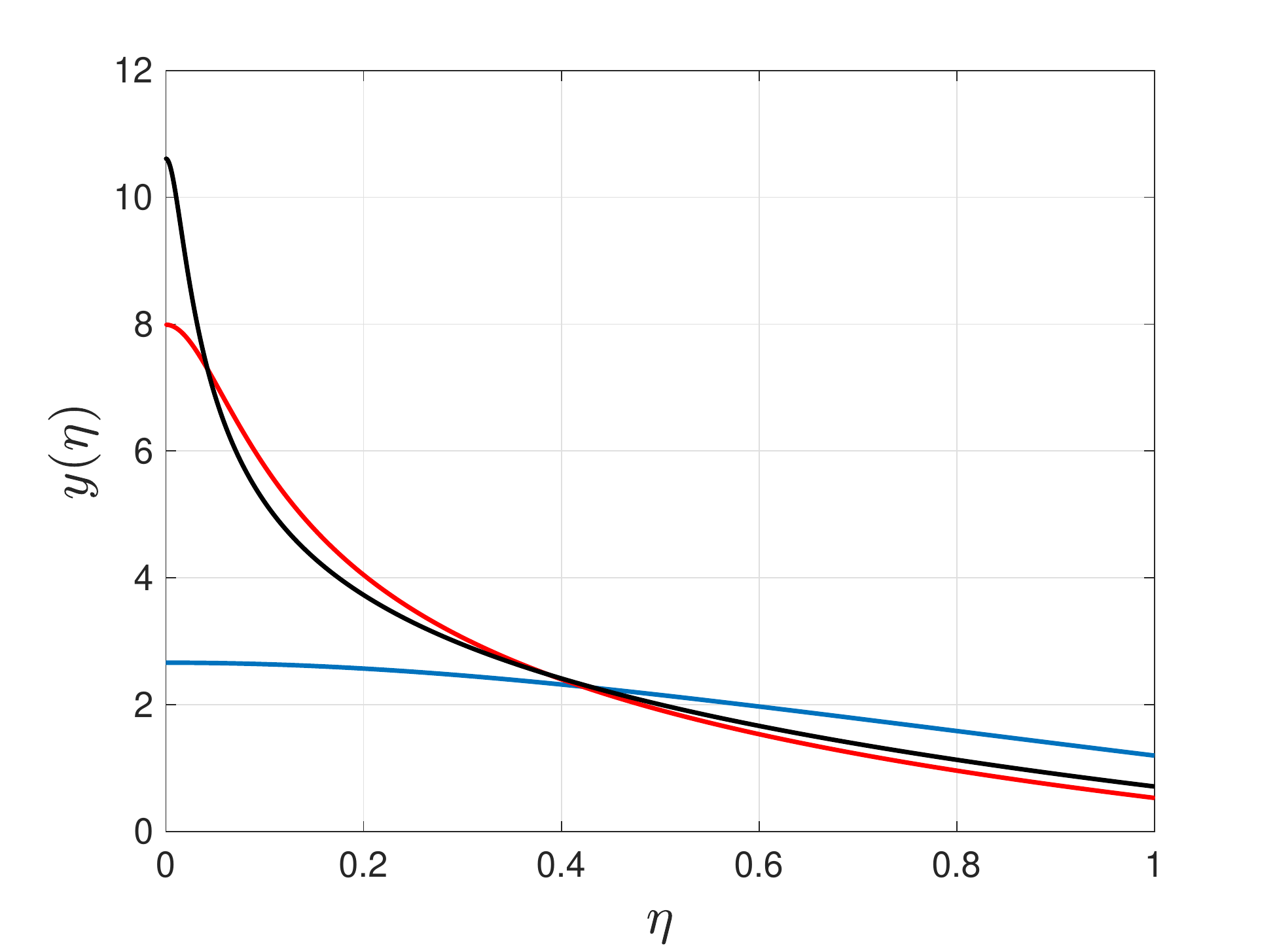} }
\caption{Computation of the three solutions to (\ref{Nprob}) for $N=1.9$. Bifurcation diagrams of the computed solutions with the continuation technique (dots in the left picture) and the three numerical solutions (right picture).\label{fig_ex19}}
\end{figure}
\begin{table}[bh]
\centerline{ 
\begin{tabular}{l|ccc}
$y_0$ & 2.6618 &  7.9906 &  10.609   \\
$n+1$   &    51 &  1248 &  2600 \\
$h_{\min}$ & 1.71e-2 &  4.72e-4 &  5.90e-5
%\hline
\end{tabular} }
\caption{The 3 solutions obtained for $N=1.9$.}
\label{tab_ex19}
\end{table}
Figure  \ref{fig_ex19} shows a set of 84 dots on the bifurcation diagram computed by the algorithm. This means that, with the selected parameters, the code requires the computation of the solution of 84 BVPs before stopping. It should be noticed that a few iterations of the nonlinear solver are needed at each step, since we are using the previous computed solution as initial guess.  

The three isolated solutions corresponding to $N=1.9$ are displayed in the right picture of Figure  \ref{fig_ex19}. For each solution, Table \ref{tab_ex19} reports the value $y_0\approx y(0)$ of the solution at $\eta=0$, the number of points $n+1$ introduced by the BVP solver, and the minimum stepsize $h_{\min}$ used by the variable stepsize technique employed in the code. We notice that, after the computation of the third (uppermost) solution, the code needs to proceed with the detection of the two subsequent turning points before realizing that no further solution does exist. 

\subsection{Test \# 2}
In this second example, we solve a more involved problem, by choosing for the parameter $N$ a value very close to $2$, namely, $N=2.0001$. We start with computing the solutions belonging to the lower branch of the bifurcation diagram associated with the values $N = 2$ and $N = 2.1$, respectively. %All the other input parameters are chosen as in the previous example. 
In such a case, the continuation algorithm finds eight isolated solutions before stopping.  Figure \ref{fig_ex20001} shows the bifurcation diagram with 202 dots representing the as many boundary value problems needed by the procedure. Table \ref{tab_ex20001} has the same meaning as Table \ref{tab_ex19};  we notice that, as we move from one solution to the next in the subsequent upper branch, the number of points needed by the code to approximate the solution within the input tolerance significantly increases. This is a consequence of the formation of a boundary layer in a right neighbourhood of 0, whose radius on the $\eta$-axis decreases as we move from lower to upper branches.  
The use of an efficient variable stepsize strategy, with the possibility of increasing the number of points to match the required accuracy, becomes crucial to handle the increasing stiffness of the problems to be solved during the branches inspection.  
\begin{figure}[hbt]
\centerline{
\includegraphics[width=.56\textwidth]{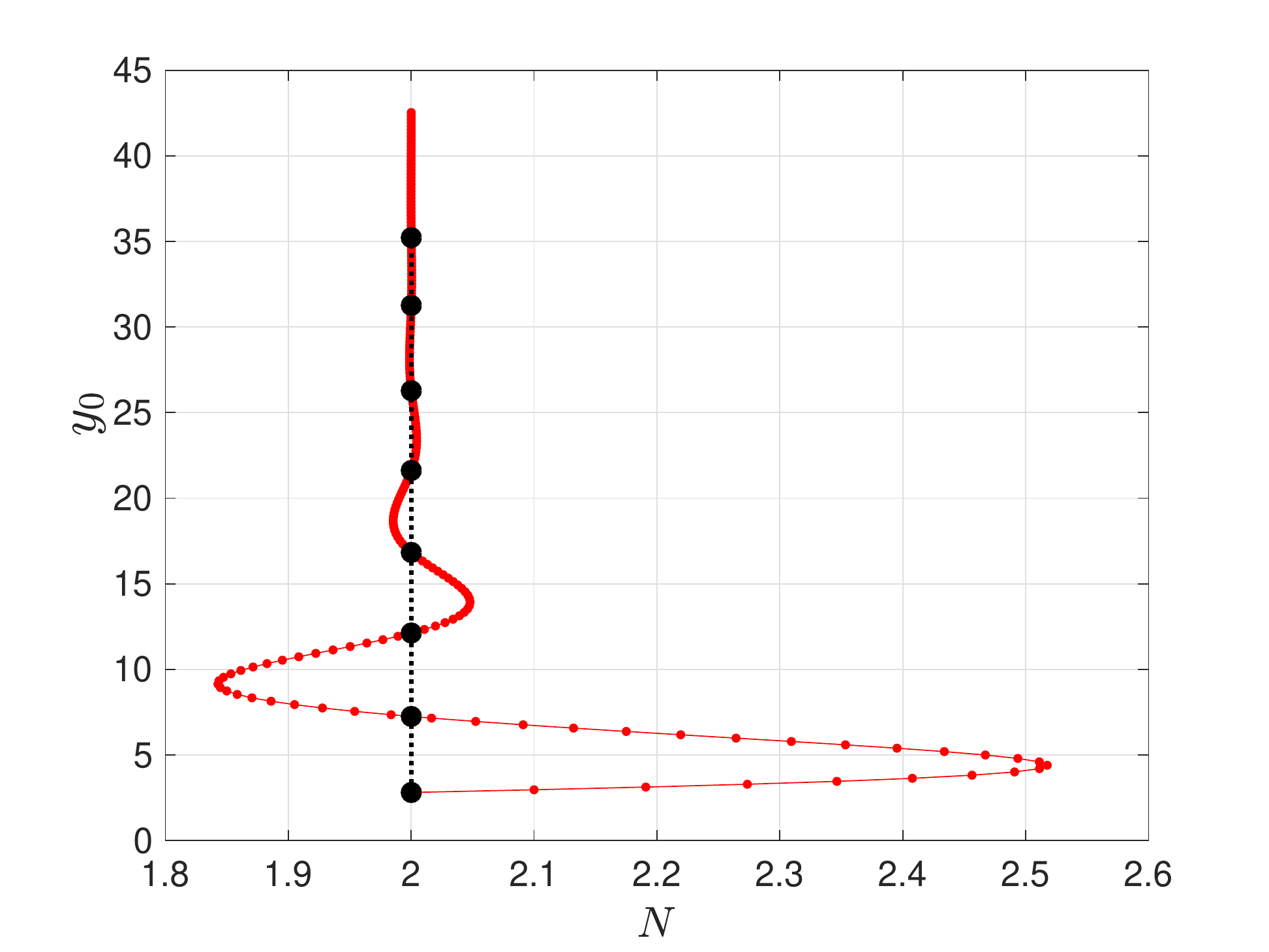} \hspace*{-.8cm}
\includegraphics[width=.257\textwidth]{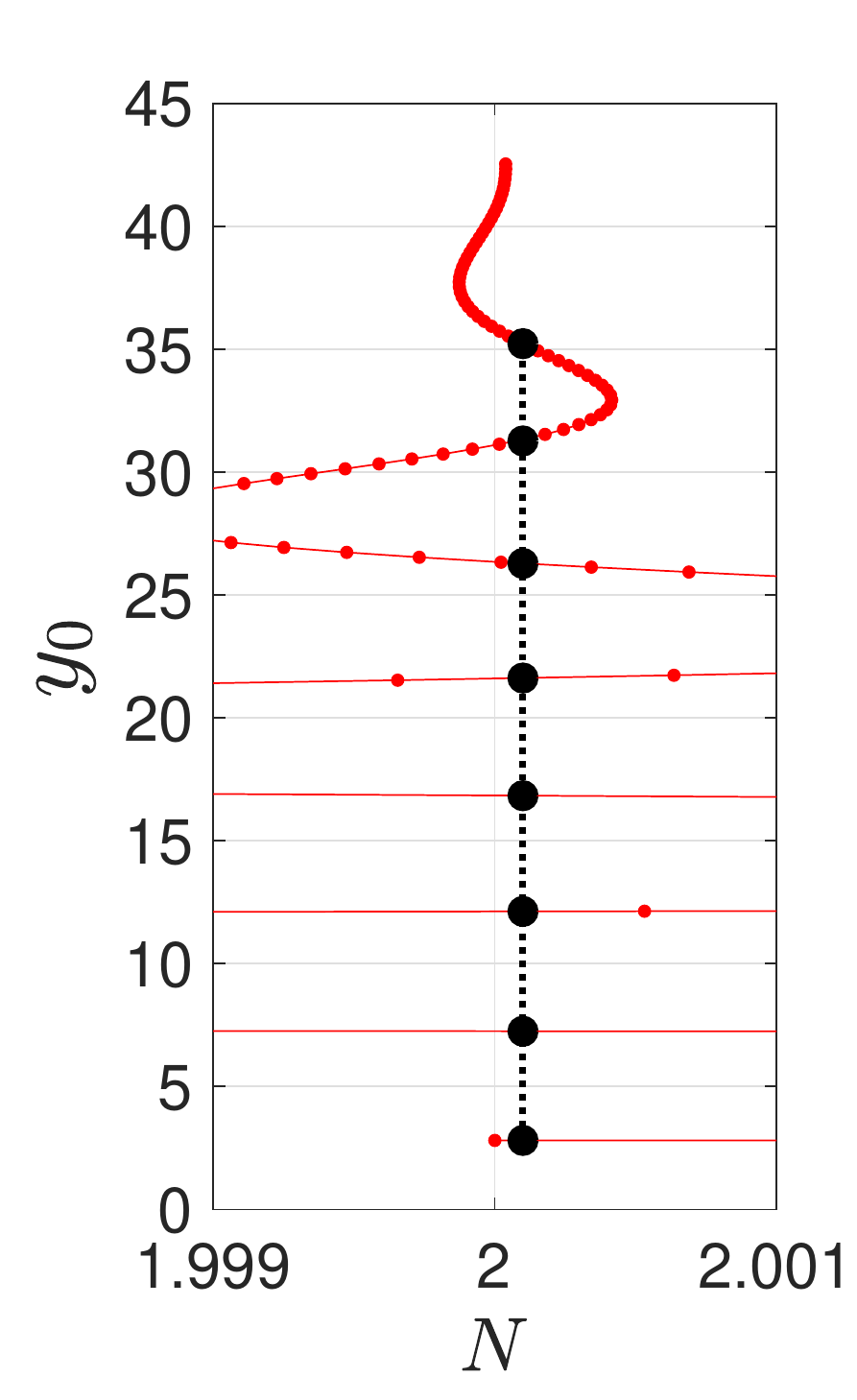} }
\caption{Bifurcation diagram with a closeup highlighting the  eight computed solutions to (\ref{Nprob}) for $N=2.0001$. \label{fig_ex20001}}
\end{figure}
\begin{table}[hbt]
\centerline{ 
\begin{tabular}{l|rrrrrrrr}
$y_0$ & 2.8082 &  7.2495 &  12.124 &  16.832 &  21.618 &  26.280 &  31.263 &  35.221 \\
$n+1$   & 51 &  306 &  1497 &  2163 &  3431 &  5048 &  7641 &  6399 \\
$h_{\min}$ & 1.67e-2 &  1.58e-3 &  4.17e-5 &  1.74e-6 &  4.14e-7 &  1.64e-8 &  6.60e-10 &  3.96-10
%\hline
\end{tabular} }
\caption{Some statistics on the eight solutions to (\ref{Nprob}) obtained for $N=2.0001$.}
\label{tab_ex20001}
\end{table}

\begin{figure}[htb]
\centerline{
\includegraphics[width=.56\textwidth]{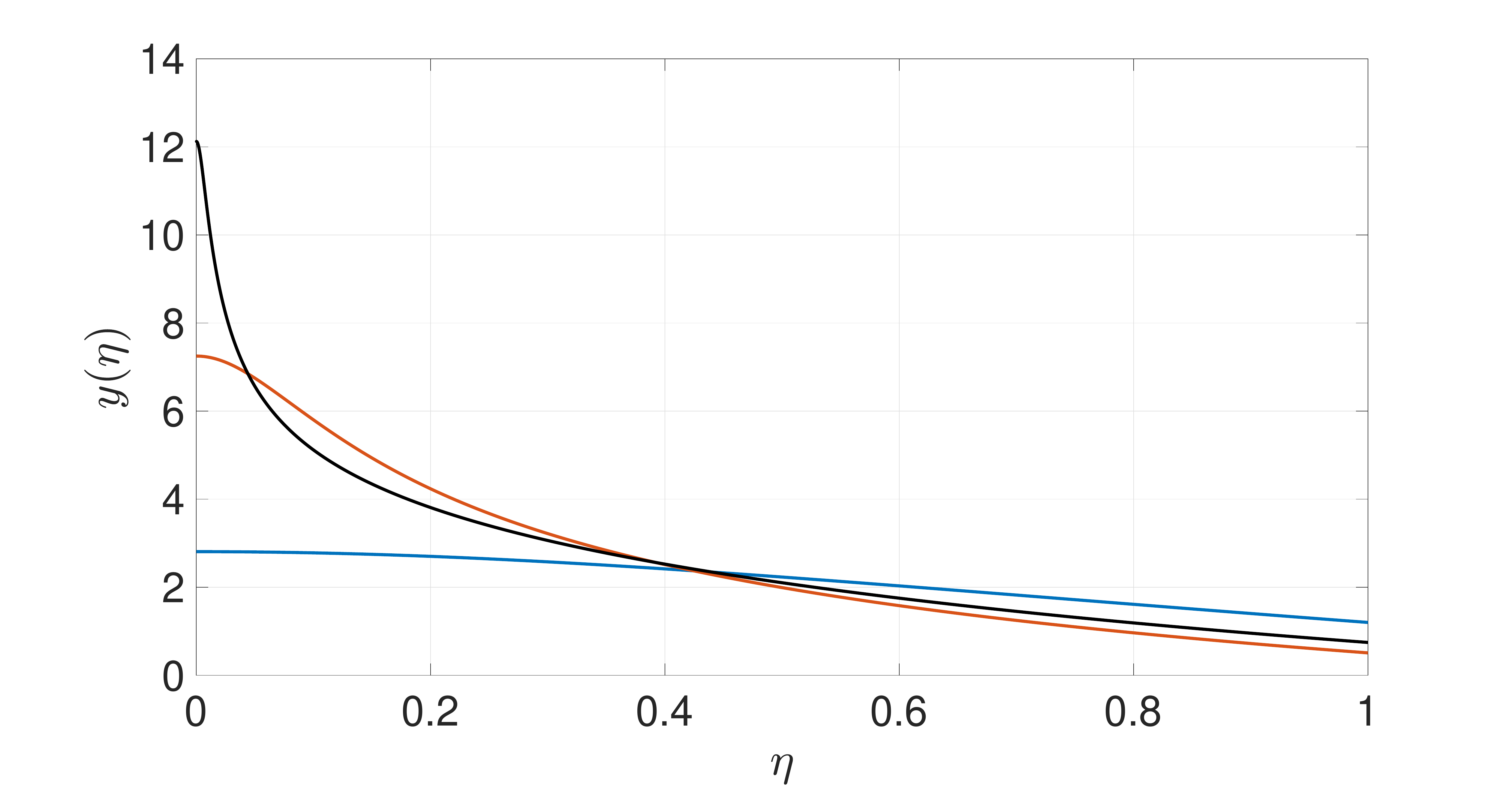} \hspace*{-.8cm}
\includegraphics[width=.56\textwidth]{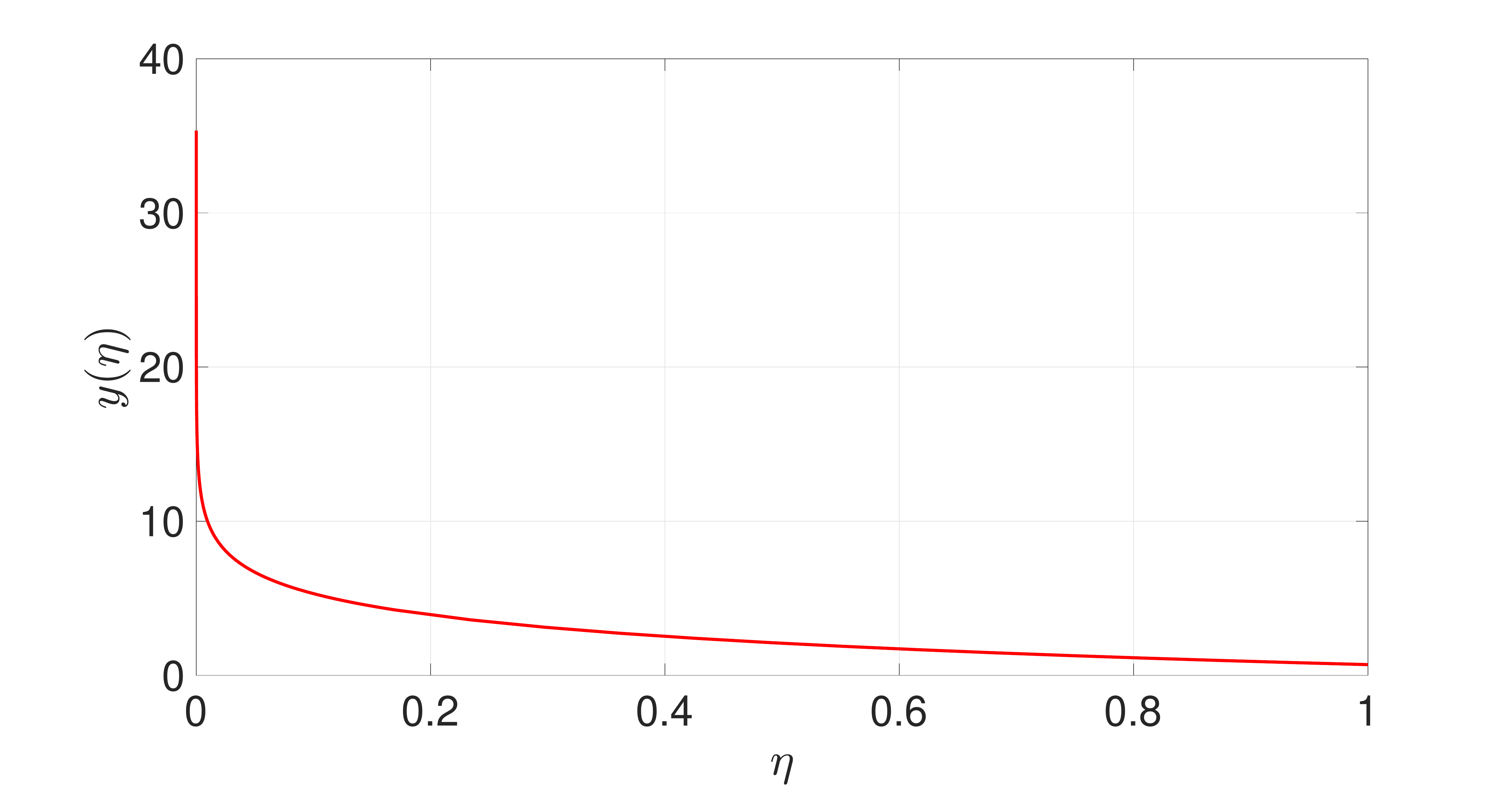} }
\caption{Computation of the first three solutions (left picture) and the last one (right picture) to (\ref{Nprob}) for $N=2.0001$. \label{fig_sol20001}}
\end{figure}

\section{Conclusions}
\label{sec:6}
In this paper, we have considered some mathematical aspects of a physical model governing the fluid-static configurations of a self-gravitating perfect gas enclosed in a spherical solid shell. The problem takes the form of the well-known  Lane-Emden equation describing the mass-density of the gas. Differently from what is usually done in  astrophysics, here the equation is coupled with boundary conditions emerging from the application of  Gauss' theorem at the center and at the internal wall of the solid shell. We have carried out an analytical study about the existence and multiplicity of solutions as well as an implementation of some robust numerical strategies to compute them. An extension to higher dimension has  been proposed in the Appendix.

\appendix
\section{Appendix}

Equation~\eqref{y_d_equat} is the three-dimensional radial version of the elliptic equation
\begin{equation}\label{ell-eq}
\Delta u+\lambda \, e^{u}=0
\end{equation}
(with $\Pi_1$ replaced by $\lambda$). In this section we generalize the results in Section~\ref{sec:3} to arbitrary dimensions and compare them with analogous, known results for Dirichlet problems.

We begin by observing that, for every $\lambda\in(0,\infty)$, the elliptic equation~\eqref{ell-eq}
has exactly a one-parameter family of entire radially symmetric solutions, defined by
\begin{equation}\label{app-repr}
u_\sigma(x)=U_{n}(\sigma\left\vert x\right\vert )+\ln\left({\sigma^{2}}/{\lambda}\right) ,
\end{equation}
where $\sigma \in (0,\infty)$ and $U_{n}$ is the unique maximal forward solution of the initial value problem
\begin{equation*}
%\label{IVPprob-n}
\left\{
\begin{array}{l}
u''+\dfrac{n-1}{t} \, u' + e^{u}=0,  \\[2mm]
u'(0)=u(0)=0.
\end{array}\right.
\end{equation*}
The same argument as in Section~\ref{sec:3} shows that $U_n$ is global and decreasing throughout. The dimension $n$ does not seem to affect the profile of $U_{n}$ (see Figure~\ref{fig-U-n}, obtained via standard techniques of numerical integration); nonetheless, it plays a crucial role in determining the number of solutions for boundary value problems associated with equation~\eqref{ell-eq}.
\begin{figure}[htb]
\centerline{
\includegraphics[width=.8\textwidth]{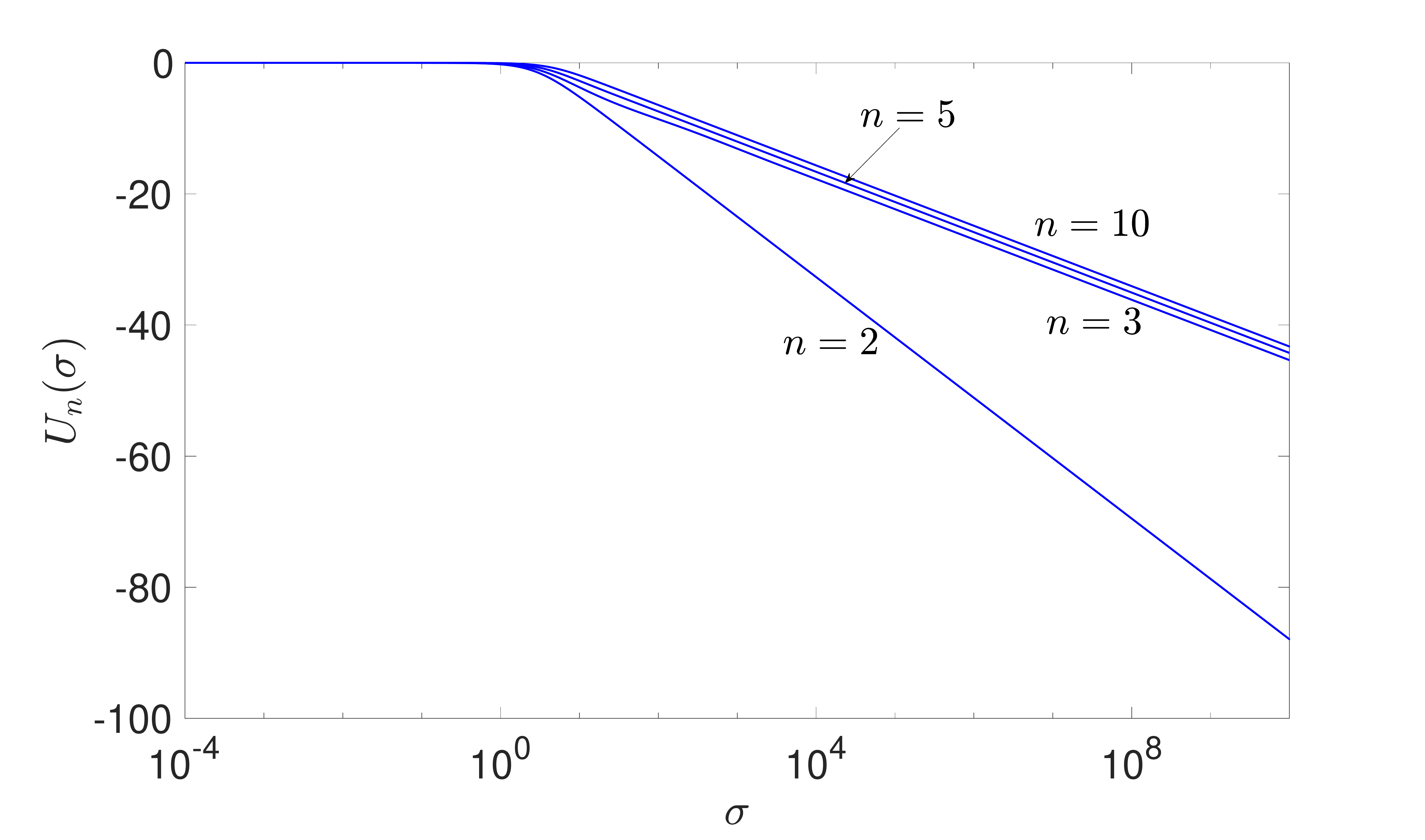} }
\caption{Profile of $U_{n}$ for different values of $n$.}
\label{fig-U-n}
\end{figure}

\subsection{Boundary value problems}
For $R\in(0,\infty)$, let $B_R=\left\{  x\in\RR^{n}\mid\left\vert x\right\vert <R\right\}$.
We can complement equation~\eqref{ell-eq}, posed on $B_R$, with the Neumann boundary condition
\begin{equation}\label{app-n}
\dfrac{\partial u}{\partial\nu} =\gamma\qquad\text{on }\partial B_R.
\end{equation}
In view of the comments above, we assume $\gamma\in(-\infty,0)$. The parameter $\lambda$ is irrelevant: if $u$ solves~\eqref{ell-eq}-\eqref{app-n} with $\lambda = \lambda_1$, then $u + \ln\left({\lambda_1}/{\lambda_2}\right)$ solves the same problem with $\lambda=\lambda_2$. Therefore, there is no loss of generality in confining attention to $\lambda=1$. Furthermore, if $u$ solves~\eqref{ell-eq}-\eqref{app-n} with $R = R_1$ and $\gamma=\gamma_1$, then the function
\begin{equation*}
x \mapsto u\left(\frac{R_1}{R_2} \, x\right) + \ln\left(\frac{R_1^2}{R_2^2}\right)
\end{equation*}
solves the same problem with $R = R_2$ and $\gamma=\gamma_1 R_1 / R_2$. Thus, we fix $R=1$ and discuss the existence and multiplicity of solutions to
\begin{equation}\label{app-n-2}
\left\{
\begin{array}{rl}
\Delta u+ e^{u}    =0 & \quad\text{in }B_1,\\[1mm]
\frac{\partial u}{\partial\nu}  =\gamma & \quad\text{on }\partial B_1
\end{array}\right.
\end{equation}
in dependence of $\gamma$ and $n$. We can also complement equation~\eqref{ell-eq} with the Dirichlet boundary condition
\begin{equation}\label{app-d}
u =c\qquad\text{on }\partial B_R ,
\end{equation}
with $c \in \RR$.
If $u$ solves~\eqref{ell-eq}-\eqref{app-d} with $\lambda=\lambda_1$, $R = R_1$, and $c=c_1$, then the function
\begin{equation*}
x \mapsto u\left(\frac{R_1}{R_2} \, x \right) + \ln\left(\frac{\lambda_1}{\lambda_2} \frac{R_1^2}{R_2^2}\right)
\end{equation*}
solves the same problem with $\lambda=\lambda_2$, $R = R_2$, and $c=c_1+ \ln\left(\frac{\lambda_1}{\lambda_2} \frac{R_1^2}{R_2^2}\right)$.
In line with the existing literature, we fix $c=0$ and $R=1$, and
discuss the existence and multiplicity of solutions to
\begin{equation}\label{app-d-2}
\left\{
\begin{array}{rl}
\Delta u+ \lambda \, e^{u}    =0 & \quad\text{in }B_1,\\[1mm]
u  =0 & \quad\text{on }\partial B_1
\end{array}\right.
\end{equation}
in dependence of $\lambda$ and $n$. The discussion at the beginning of this appendix yields the following characterizations.

\begin{theo}
\label{app-theor} 
\leavevmode
\begin{itemize}
\item[{\rm (a)}] A function $u=u(x)$ is a radially symmetric solution of Problem~\eqref{app-n-2}
if, and only if, $u$ satisfies~\eqref{app-repr} and $\sigma$ solves
\begin{equation*}
\sigma \, U_{n}^{\prime}(\sigma)=\gamma . \label{app-sigma}
\end{equation*}
\item[{\rm (b)}] A function $u=u(x)$ is a radially symmetric solution of Problem~\eqref{app-d-2}
if, and only if, $u$ satisfies~\eqref{app-repr} and $\sigma$ solves
\begin{equation*}
\sigma^{2}e^{U_{n}(\sigma)}=\lambda . \label{app-sigma-d}
\end{equation*}
\end{itemize}
\end{theo}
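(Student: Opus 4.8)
The plan is to follow closely the template of Theorem~\ref{rep}, since both parts of the statement are essentially bookkeeping built on top of the one-parameter representation~\eqref{app-repr} already established for entire radially symmetric solutions. The only genuinely new content is the translation of each boundary condition into a scalar equation in $\sigma$.

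First I would argue that every radially symmetric solution $u$ of the elliptic equation on $B_1$ is necessarily of the form~\eqref{app-repr}. Writing $u(x)=w(|x|)$, smoothness of $u$ at the center forces $\nabla u(0)=0$ and hence $w'(0)=0$, so $w$ solves the singular radial ODE $w''+\frac{n-1}{t}w'+\lambda e^{w}=0$ with $w'(0)=0$ and some value $w(0)=\beta$. Setting $z(t)=w(e^{-\beta/2}\lambda^{-1/2}t)-\beta$, a direct computation shows that $z$ solves the defining initial value problem for $U_{n}$; uniqueness of its maximal forward solution then gives $z=U_{n}$, whence $w(t)=U_{n}(e^{\beta/2}\lambda^{1/2}t)+\beta$. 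Putting $\sigma=e^{\beta/2}\lambda^{1/2}$ recovers exactly~\eqref{app-repr}, so~\eqref{app-repr} parametrizes all radially symmetric solutions as $\sigma$ ranges over $(0,\infty)$. This is verbatim the argument used in the proof of Theorem~\ref{rep}, now with the dimension-dependent coefficient $(n-1)/t$ in place of $2/t$.

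With the representation in hand, the two parts reduce to evaluating one boundary quantity on $u_\sigma(x)=U_{n}(\sigma|x|)+\ln(\sigma^2/\lambda)$ and reading off the resulting condition on $\sigma$. For (a), the outward normal derivative on $\partial B_1$ is the derivative of $U_{n}(\sigma r)$ in $r$ evaluated at $r=1$, namely $\sigma\,U_{n}'(\sigma)$, so the Neumann condition $\partial u/\partial\nu=\gamma$ is equivalent to $\sigma\,U_{n}'(\sigma)=\gamma$. For (b), the boundary value is $U_{n}(\sigma)+\ln(\sigma^2/\lambda)$, and the Dirichlet condition $u=0$ on $\partial B_1$ becomes $\ln(\sigma^2/\lambda)=-U_{n}(\sigma)$, that is, $\sigma^2 e^{U_{n}(\sigma)}=\lambda$. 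In both cases the ``if'' direction follows immediately by substituting a $\sigma$ that satisfies the scalar equation back into~\eqref{app-repr} and verifying the boundary condition directly, exactly as the ``if'' part of Theorem~\ref{rep} was handled by direct calculation.

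I do not expect a serious obstacle, as the proof is essentially a transcription of Theorem~\ref{rep} into arbitrary dimension. The one point deserving genuine care is the first step: one must justify that radial smoothness of $u$ at the origin forces $w'(0)=0$, so that $w$ matches the initial data of the singular initial value problem defining $U_{n}$ and uniqueness can be invoked. Everything downstream is elementary differentiation and algebra.
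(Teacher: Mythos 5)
Your proposal is correct and follows essentially the same route as the paper, which proves the theorem by invoking the representation~\eqref{app-repr} (justified, as you do, by repeating the rescaling-plus-uniqueness argument of Theorem~\ref{rep} with $(n-1)/t$ in place of $2/t$) and then reading the Neumann and Dirichlet conditions off the boundary values of $u_\sigma$. Your explicit attention to $w'(0)=0$ at the origin is a detail the paper leaves implicit but is handled identically in spirit.
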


\begin{rem}
\label{inter}
In view of Theorem~\ref{app-theor}, counting solutions to the boundary value problems~\eqref{app-n-2} and~\eqref{app-d-2} amounts to counting intersections between the family of horizontal lines and the graphs of the functions $\sigma \mapsto \sigma \, U_{n}^{\prime}(\sigma)$ and $\sigma \mapsto \sigma^{2}e^{U_{n}(\sigma)}$, respectively.
\end{rem}

\subsection{Discussion}
For the Dirichlet problem~\eqref{app-d-2}, a complete description of the existence and multiplicity of solutions is available (see~\cite{JacSch02}). Specifically, we know that for any $n \ge 1$, there exists a finite positive value $\lambda_n^*$, such that there are no solutions when $\lambda>\lambda_n^*$.  
Moreover,
\begin{itemize}
\item for $n \in \{1,2\}$, there are two solutions for $\lambda<\lambda_n^*$ and exactly one solution when $\lambda=\lambda_n^*$;
\item for $3 \le n \le 9$, there are infinitely many solutions for $\lambda = \lambda_n^{**}:= 2(n-2)$ and a  finite but large number of solutions when $|\lambda- \lambda_n^{**}|$ is small;
\item for $n \ge 10$, there is one solution for $\lambda < \lambda_n^{*}=2(n-2)$ and no solution when $\lambda=\lambda_n^*$.
\end{itemize}
These results fit with the information that we obtain from Figure~\ref{fig2}, portraying
the function $\sigma \mapsto \sigma^{2} e^{U_{n}(\sigma)}$ for different values of $n$.
\begin{figure}[htb]
\centerline{
\includegraphics[width=.56\textwidth]{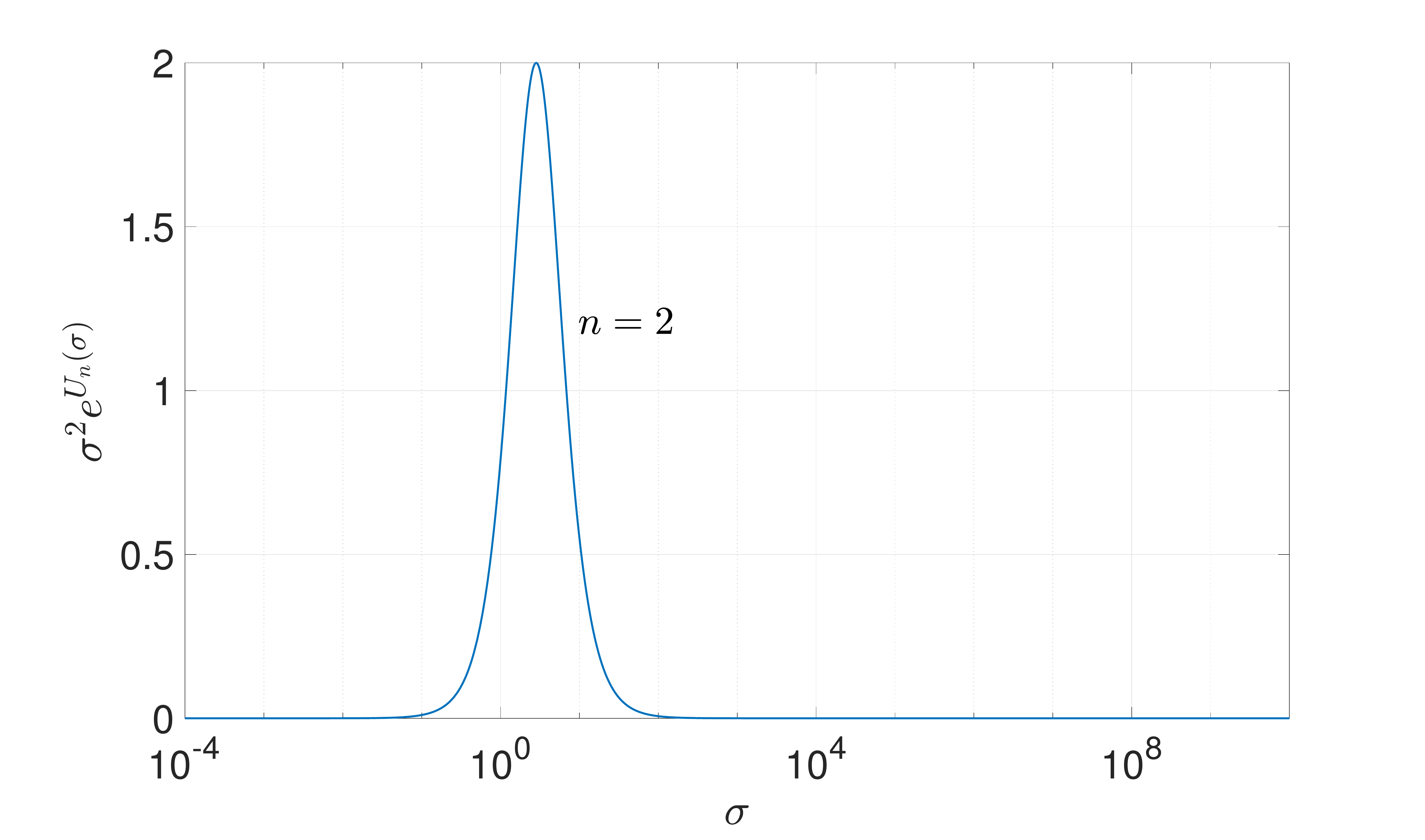} \hspace*{-.8cm}
\includegraphics[width=.56\textwidth]{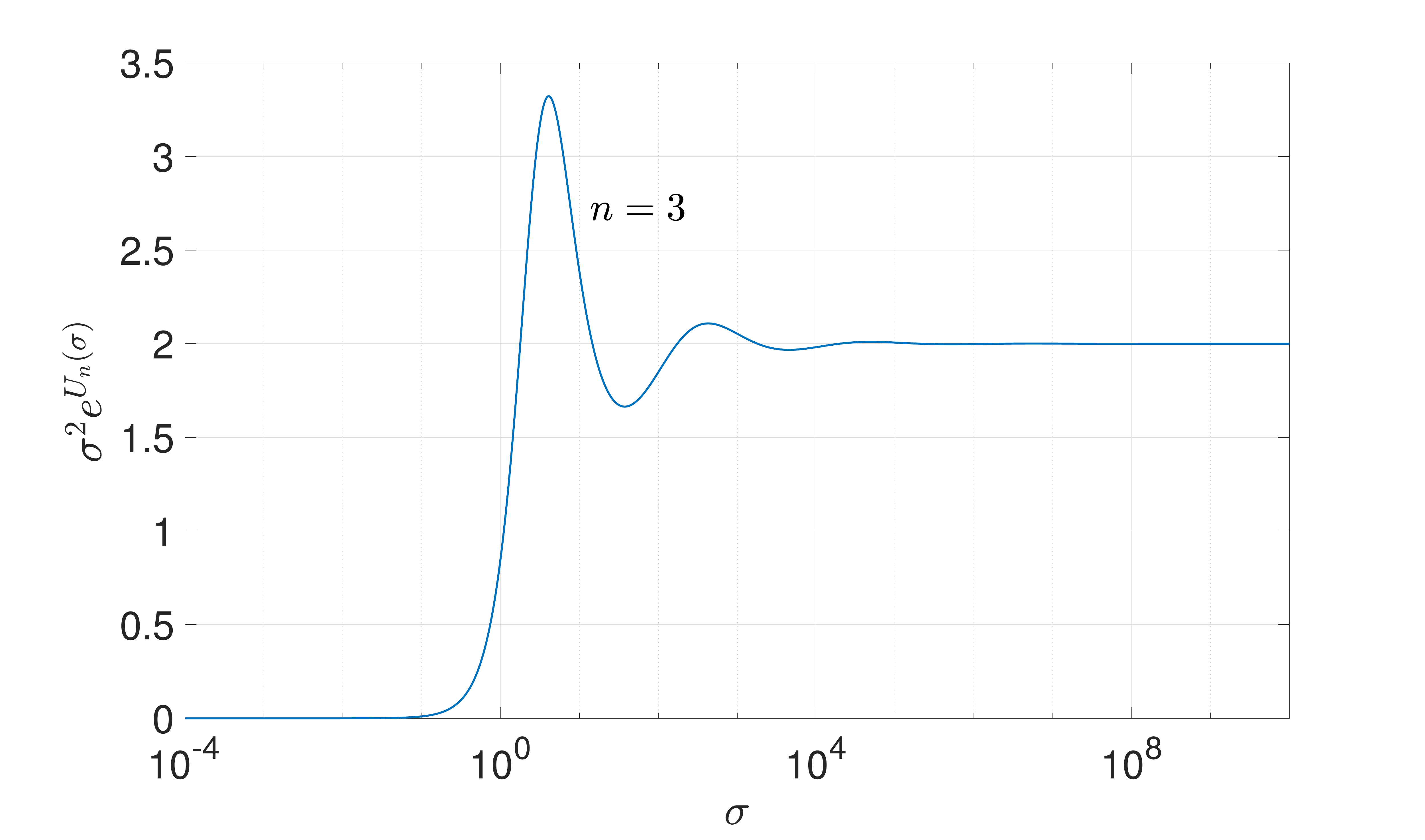} }
\centerline{
\includegraphics[width=.56\textwidth]{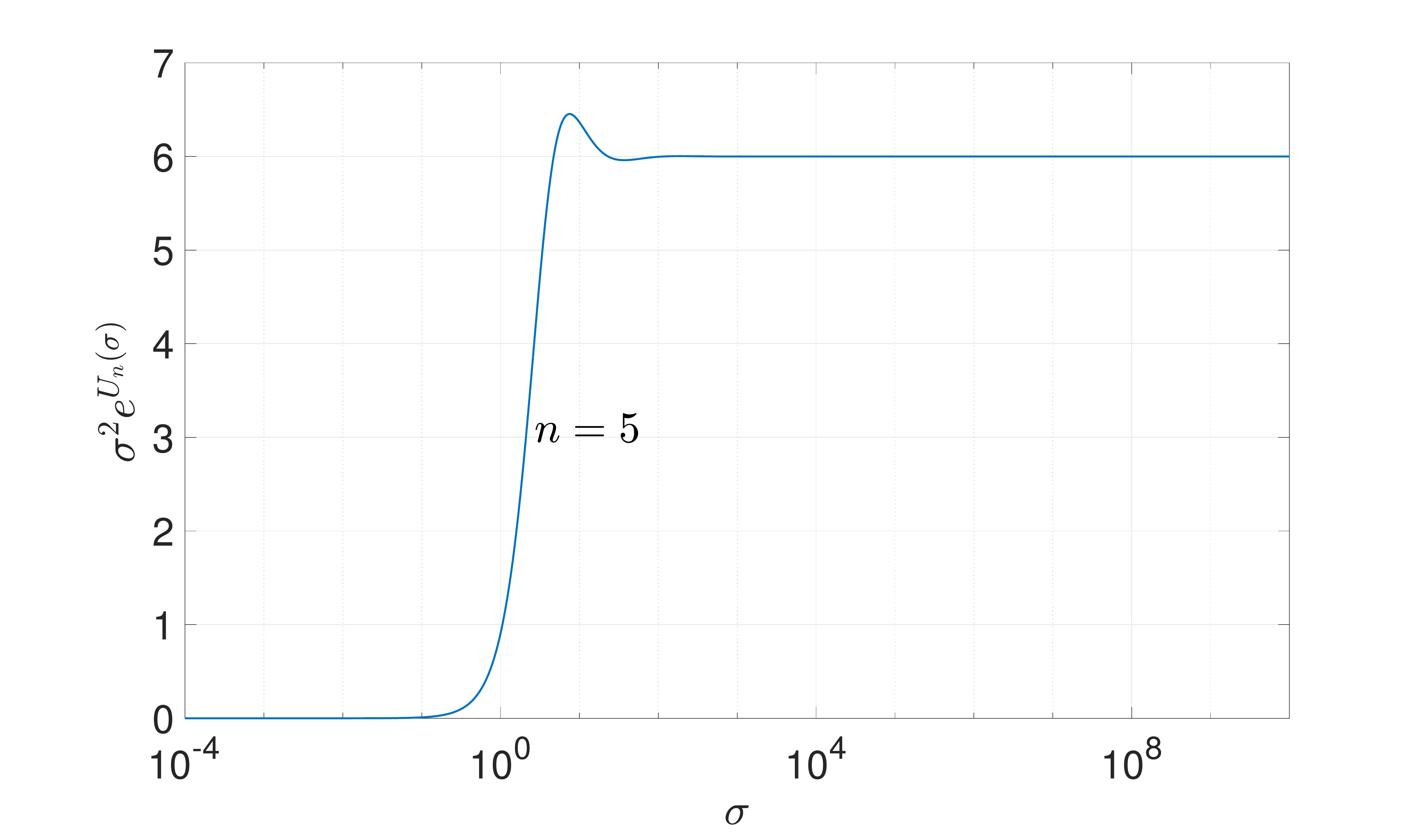} \hspace*{-.8cm}
\includegraphics[width=.56\textwidth]{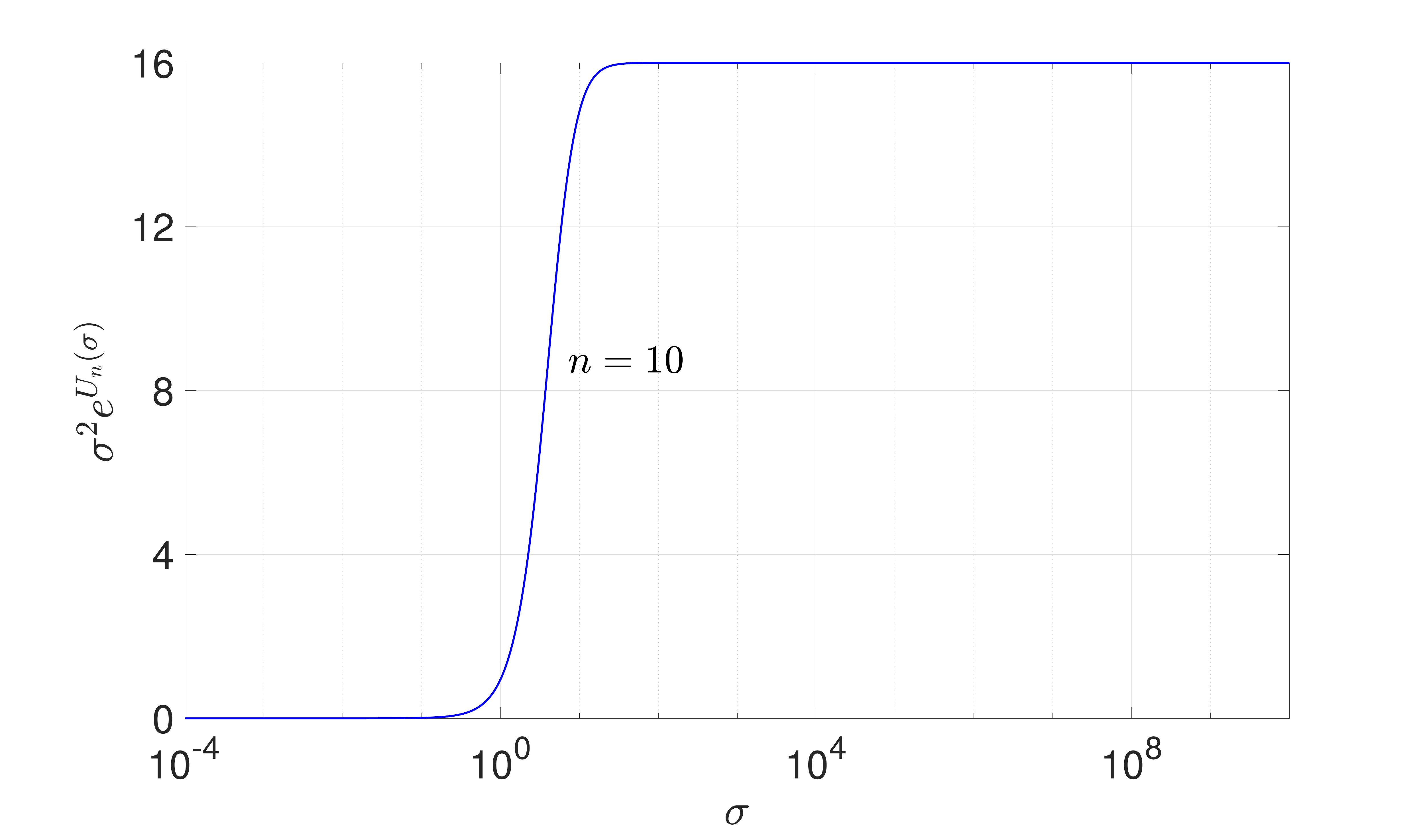} }
\caption{Profiles of the function $\sigma \mapsto \sigma^2 e^{U_{n}(\sigma)}$ for different values of $n$.}
\label{fig2}
\end{figure}
For the Neumann problem~\eqref{app-n-2} we are not aware of corresponding results.
Figure~\ref{fig3} portrays the function $\sigma \mapsto \sigma \, U_{n}^{\prime}(\sigma)$ for different values of $n$. 
\begin{figure}[htb]
\centerline{
\includegraphics[width=.56\textwidth]{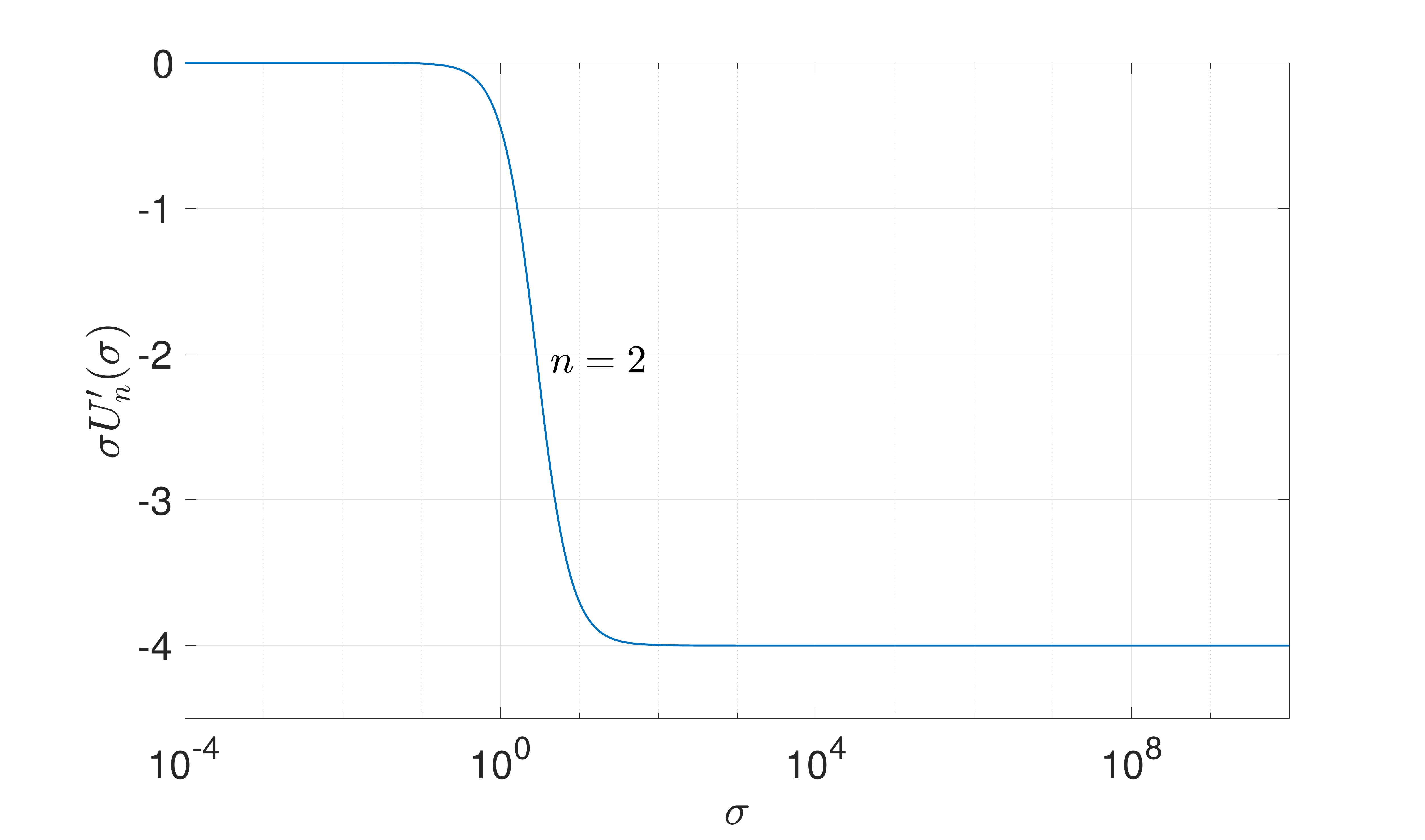} \hspace*{-.8cm}
\includegraphics[width=.56\textwidth]{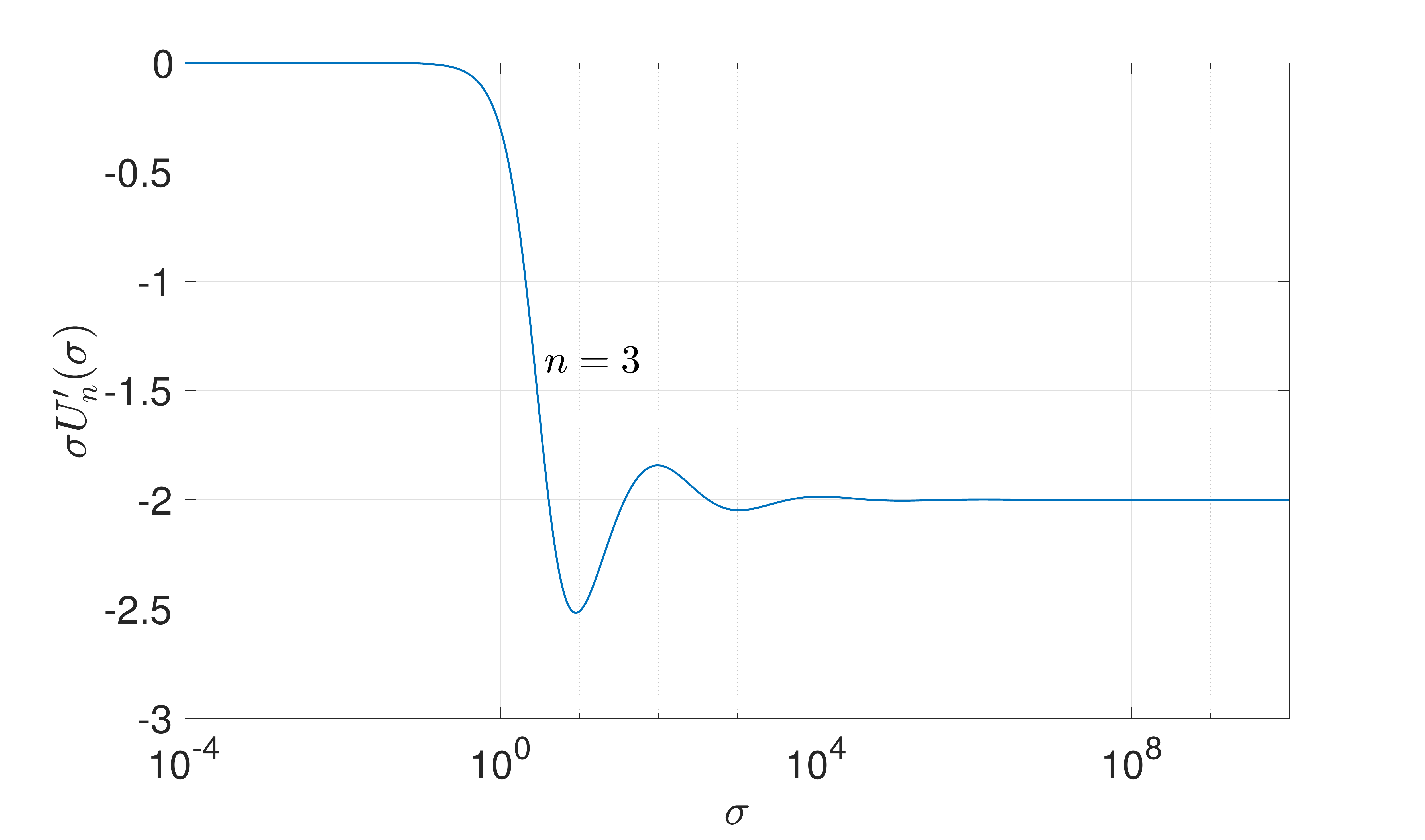} }
\centerline{
\includegraphics[width=.56\textwidth]{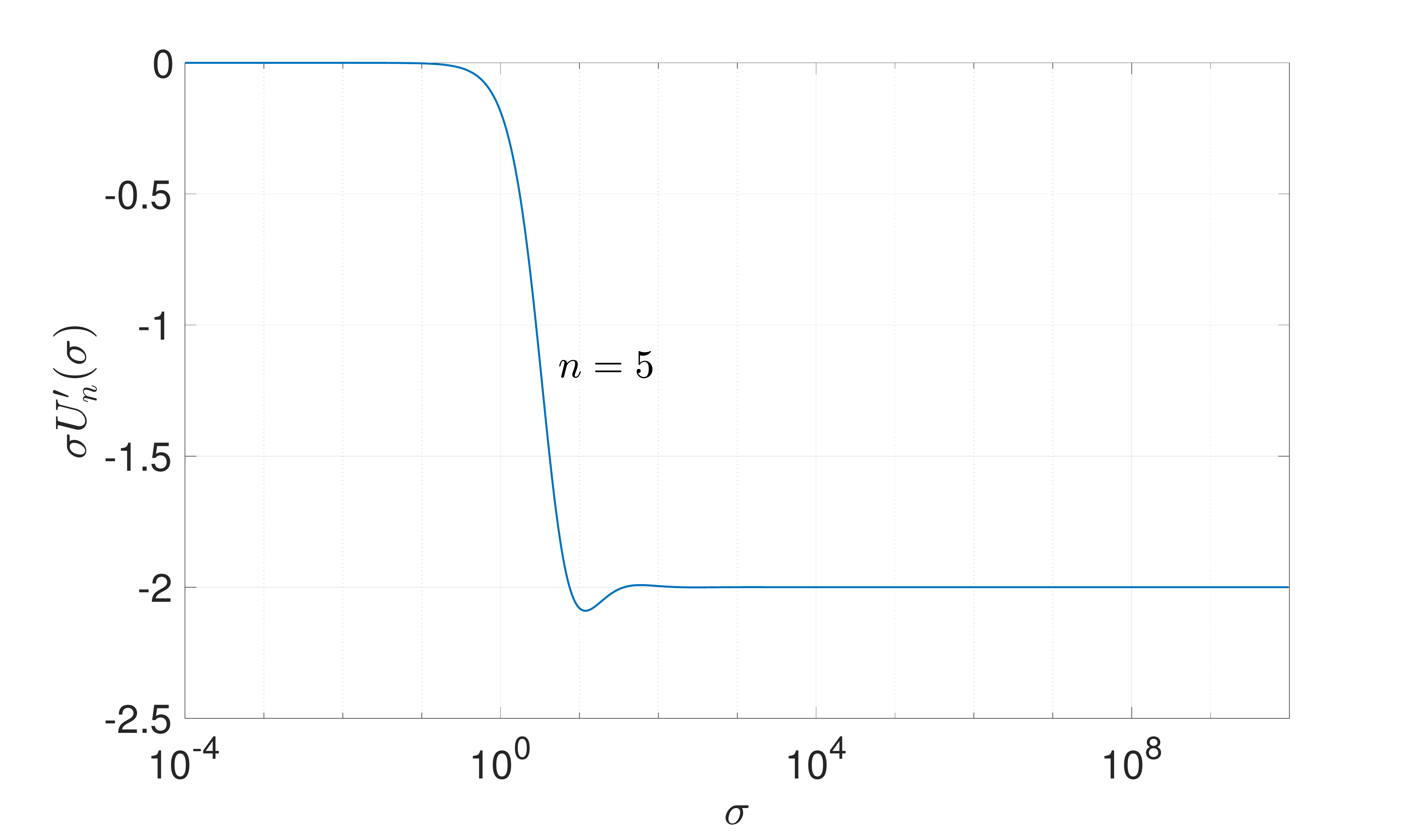} \hspace*{-.8cm}
\includegraphics[width=.56\textwidth]{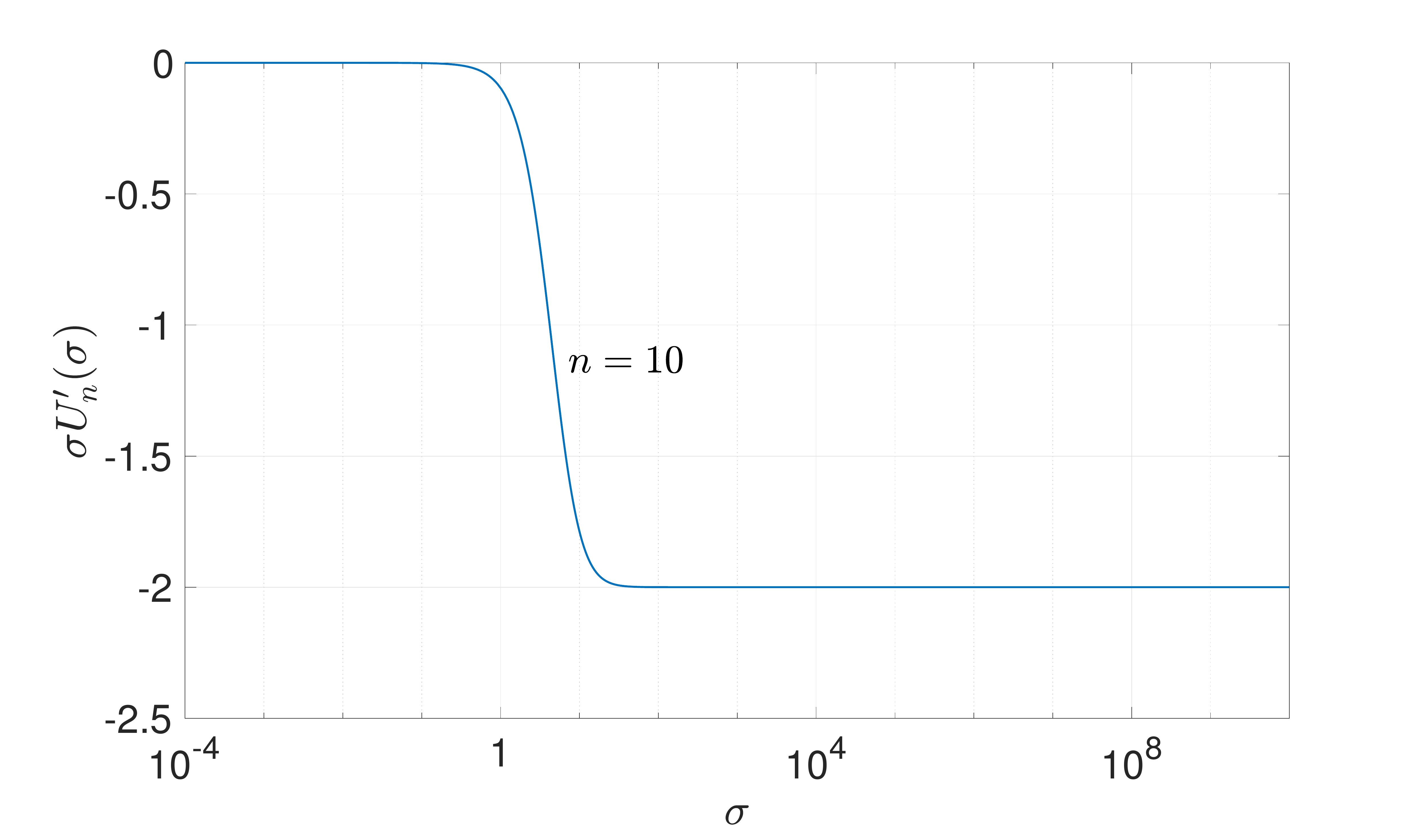} }
\caption{Profiles of the function $\sigma \mapsto \sigma U_{n}'(\sigma)$ for different values of $n$.}
\label{fig3}
\end{figure}
These graphs, in view of  Remark~\ref{inter}, lead to conclusions that depend on $n$, mirroring the different cases observed for the Dirichlet problem:

\medskip
\noindent for any $n \ge 1$, there exists a finite negative value $\gamma_n^*$, such that there are no solutions when $\gamma<\gamma_n^*$. Moreover,
\begin{itemize}
\item for $n =2$, there is one solution when $\gamma>\gamma_2^*=-4$;
\item for $n \in \{3,5\}$ (presumably, for $3 \le n \le 9$), there are infinitely many solutions for $\gamma = -2$ and a finite but large number of solutions when $|\gamma +2|$ is small;
\item for $n = 10$ (presumably, for $n \ge 10$), there is one solution when $\gamma>-2$.
\end{itemize}
Clearly, these assertions are based on numerical evidence and would require a proof.

 %until the condition $

\subsection*{Acknowledgments} 
This research study has been partially supported by the ESA's General Studies Programme under the contract ``Synergy between electromagnetic and gravitational fluid dynamics'' (no. 4000115042/15/nl/kml/fg).

\end{document}